\numberwithin{equation}{section}
\theoremstyle{plain}  
\newtheorem{thm}[equation]{Theorem}
\newtheorem{lemma}[equation]{Lemma}
\theoremstyle{definition}  
\newtheorem{defn}[equation]{Definition}
\newtheorem{remark}[equation]{Remark}
\newtheorem{conditions}[equation]{Conditions}
 \newcommand{\Smash} {\wedge}
 \DeclareRobustCommand{\bigWedge} {\bigvee}
 \newcommand{\B} {\field B}
\newcommand{\lra}{\longrightarrow}
\newcommand{\Z}{\mathbb Z}
\newcommand{\C}{\mathbb C}
\newcommand{\D}{\mathbb P}
\newcommand{\MU}{\mathbb {MU}}
\newcommand{\E}{\mathbb E}
\newcommand{\BP}{\mathbb{BP}}
\newcommand{\BPRm}[2]{\underline{\B\D}\langle #1 \rangle_{#2}}
\newcommand{\BPR}[1]{\BPRm{#1}{(2^{#1+1}-1)(1+\alpha)}}
\DeclareMathOperator{\EM}{\Ek(\Z,2m+1)}
\DeclareMathOperator{\EMF}{\Ek(\Z/2^q,2m)}
\DeclareMathOperator{\EMf}{\Ek(\Z/2,m)}
\DeclareMathOperator{\Ek}{K}
\DeclareMathOperator{\BOg}{BO}
\DeclareMathOperator{\BSg}{BSpin}
\DeclareMathOperator{\BSOg}{BSO}
\DeclareMathOperator{\BUg}{BU}
\DeclareMathOperator{\BSUg}{BSU}
\DeclareMathOperator{\id}{id}
\DeclareMathOperator{\Maps}{Maps}
 \newcommand{\field}[1] {\mathbb #1}
 \newcommand{\lift}[1]{#1'}
\begin{document}
\pagestyle{plain}

\title
{Landweber flat real pairs and $ER(n)$-cohomology.}
\author{Nitu Kitchloo}
\address{Department of Mathematics, Johns Hopkins University, Baltimore, USA}
\email{nitu@math.jhu.edu}
\author{Vitaly Lorman}
\address{Department of Mathematics, University of Rochester, Rochester, USA}
\email{vlorman@ur.rochester.edu}
\author{W. Stephen Wilson}
\address{Department of Mathematics, Johns Hopkins University, Baltimore, USA}
\email{wsw@math.jhu.edu}
\thanks{The authors would like to thank the referee for a careful reading and helpful suggestions.}
\thanks{The first author is supported in part by the NSF through grant DMS
  1307875.}
  \thanks{The third author would like to thank the Academy of Mathematics and Systems Science, part of the Chinese Academy of Sciences, for its hospitality and support during part of the research for this paper.}

\date{\today}


{\abstract

We take advantage of the internal algebraic structure of the Bockstein spectral sequence converging to $ER(n)^*(pt)$ to prove that for spaces $Z$ that are part of Landweber flat real pairs with respect to $E(n)$ (see Definition \ref{realpair}), the cohomology ring $ER(n)^*(Z)$ can be obtained from $E(n)^*(Z)$ by base change. In particular, our results allow us to compute the Real Johnson-Wilson cohomology of the Eilenberg-MacLane spaces $Z = \EM, \EMF, \EMf$ for any natural numbers $m$ and $q$, as well as connective covers of $\BOg$: $\BOg, \BSOg, \BSg$ and $\BOg\langle 8 \rangle$.}
\maketitle

\tableofcontents

\section{Introduction}

\medskip
The Real Johnson-Wilson theories, $ER(n)$ for $n >0$ were first introduced in \cite{HK} and further developed by the first and third authors in \cite{Nitufib}. They are a family of cohomology theories generalizing real $K$-theory, $KO$ (which, 2-locally, is $ER(1)$). They are not complex-oriented, and their coefficients contain 2-torsion. They are formed by taking fixed points of a $\Z/2$-action on the complex-oriented Johnson-Wilson theories, $E(n)$. As such, the Real Johnson-Wilson theories have proved to be remarkably amenable to computations. For example, their properties were exploited in \cite{NituP, NituP2, Ban} to demonstrate some new nonimmersions of real projective spaces.

\medskip
\noindent
The goal of this paper is to describe a class of spaces, which includes certain Eilenberg-MacLane spaces and connective covers of $\BOg$, whose $ER(n)$-cohomology may be computed from $E(n)$-cohomology entirely formally by means of base change. This paper adds to a growing list of spaces whose $ER(n)$-cohomology is now known: \cite{NituP, NituP2, Ban, KW2, Lor16, KLW16a} all provide further examples. 

\medskip
\noindent
The results of this paper are far more general than those in previous examples. As in many $ER(n)$-computations, our main tool is the Bockstein spectral sequence (BSS), developed by the first and third authors in \cite{NituP, NituP2}, which computes $ER(n)$-cohomology from $E(n)$-cohomology. However, our methods  bypass the finesse of seriously engaging with BSS differentials that is required in other examples. Instead, we develop the framework of Landweber flat real pairs (Definition \ref{realpair}), to which all of the spaces of interest in this paper belong. This allows us to systematically produce permanent cycles in the BSS in such a way that all differentials play out on the coefficients (in a suitable sense) leading to a simple base change formula.

\medskip
\noindent
The statement of our general result requires an extensive 
collection of highly technical definitions.  However, the statement
for our examples only requires one straightforward, but exotic,
definition.

Recall the coefficients for 
the $p=2$ Johnson-Wilson theory $E(n)$, 
$$
E(n)^*=\Z_{(2)}[v_1, v_2, \dots, v_{n-1}, v_n^{\pm 1}], 
\text{ with }|v_i|=-2(2^i-1).
$$

\medskip
\begin{defn}\label{defhat}
Let $\lambda=\lambda(n) = 2^{n+2}(2^{n-1}-1)+1$. 
For $Z$ a space with $E(n)^*(Z)$ concentrated in even degrees, we define an endomorphism of $E(n)^*(Z)$ as follows. If $z \in E(n)^{2k}(Z)$, we send $z$ to $\hat{z}:=zv_n^{k(2^n-1)}\in E(n)^{k(1-\lambda)}(Z)$. Note that this is a map of rings (but not graded rings), and it is in fact injective for $n>1$ since $v_n$ is a unit and degrees scale by a nonzero factor of $(1-\lambda)$ (see the following remark for the anomalous case of $n=1$). We denote its image by $\hat{E}(n)^*(Z)$. It is thus a subalgebra of $E(n)^*(Z)$ and is isomorphic to $E(n)^*(Z)$. Note that the use of $\hat{E}(n)^*(-)$ here does not denote completion; we hope this will cause no confusion.

In the case where $Z$ is a point, there is a canonical lift of $\hat{E}(n)^* \subset E(n)^*$ to a subalgebra of $ER(n)^*$ along the inclusion of fixed points map $ER(n)^* \longrightarrow E(n)^*$. In particular, $\hat{v}_k$ maps to $v_k v_n^{-(2^n-1)(2^k-1)}$ in $E(n)^*$.\end{defn}

\begin{remark} \label{anomaly} 
As observed in \cite{KW2}, the case $n=1$ is anomalous since $1-\lambda = 0$. Indeed, the image of $E(1)^*(Z)$ under the above (hat) map is the entire subalgebra $E(1)^0(Z)$ given by elements in homogeneous degree zero and the map is not injective in this case. Nevertheless, we define $\hat{E}(1)^*(Z)$ to be $E(1)^0 (Z)$ for the rest of the paper and our arguments will go through with only minor changes as indicated.
\end{remark}

We can now state the main applications of our general result. Keep in mind that $E(n)^*(Z)$ is, in principle, known for all of the examples below (see Remark \ref{E(n)}). The space $\widetilde{\BSg}$ denotes the fiber of the map $p_1: \BSg \longrightarrow \Ek(\Z, 4)$ (see also \cite[Proof of Theorem 1.13]{KLW}).
\medskip

\begin{thm}\label{main3}
Let $Z$ be any of the spaces $\EM$, $\EMF, \EMf$ for 
arbitrary natural numbers $m$ and $q$, or the connective 
covers $\BOg, \BSOg, \BSg, \widetilde{\BSg}$. 
Then the subalgebra $\hat{E}(n)^*(Z)$ of $E(n)^*(Z)$ lifts to a subalgebra of $ER(n)^*(Z)$ which we denote by the same name and which is formally isomorphic to $E(n)^*(Z)$ (see Remark \ref{anomaly} for $n=1$). Furthermore, the canonical map
\[ ER(n)^* \otimes_{\hat{E}(n)^*} \hat{E}(n)^*(Z) \longrightarrow ER(n)^*(Z), \]
is an isomorphism of graded algebras for all $n$. 
\end{thm}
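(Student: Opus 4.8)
The plan is to deduce Theorem \ref{main3} from the general base-change result for Landweber flat real pairs: I would show that each listed space $Z$ is part of such a pair in the sense of Definition \ref{realpair}, and then the general theorem supplies the lift of $\hat E(n)^*(Z)$ to $ER(n)^*(Z)$ together with the base-change isomorphism. The mechanism behind the framework is the Bockstein spectral sequence converging to $ER(n)^*(Z)$: once the real-pair hypotheses hold, Landweber flatness guarantees that this BSS is obtained from the coefficient BSS for $ER(n)^*(pt)$ by base change (the relevant Tor-obstructions vanish), so that every differential is read off from the coefficients. The elements $\hat z = z\,v_n^{k(2^n-1)}$ are then permanent cycles: the power of $v_n$ in Definition \ref{defhat} is chosen exactly so that $\hat z$ lifts along the fixed-point map $ER(n)^*(Z)\to E(n)^*(Z)$. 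This simultaneously yields the subalgebra lift and, since the hat map is already a ring isomorphism onto its image, its formal isomorphism with $E(n)^*(Z)$.

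The real content is the verification of Definition \ref{realpair}, which I would organize around the two families. First I would record that $E(n)^*(Z)$ is concentrated in even degrees and is Landweber flat, i.e.\ that $2, v_1, \dots, v_{n-1}$ act as a regular sequence: for the spaces $\EM$, $\EMF$, $\EMf$ this follows from the Ravenel--Wilson computation of Morava $K$-theory and its lift to $E(n)$, and for the connective covers $\BOg, \BSOg, \BSg, \widetilde{\BSg}$ from the corresponding known computations. Even-degree concentration is exactly what is required to define $\hat E(n)^*(Z)$ in the first place. Next I would supply the Real refinement demanded by Definition \ref{realpair}: Real Eilenberg--MacLane spaces in the three $\Ek$-cases, and the complex-conjugation action on the $\BUg$-type models relating them to $\BOg$ and its connective covers in the remaining cases. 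I would then check that Landweber flatness persists at the level of the Real Johnson--Wilson cohomology of these equivariant models, which is what promotes $Z$ to one half of a Landweber flat real pair.

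The main obstacle is this last step: constructing the correct equivariant model for each family and verifying the flatness and compatibility axioms of Definition \ref{realpair} uniformly, since the Eilenberg--MacLane and $\BOg$-cover cases rest on rather different geometric inputs. I expect the persistence of flatness to the Real setting, and the treatment of $\widetilde{\BSg}$ (which is defined as the fiber of $p_1\colon\BSg\to\Ek(\Z,4)$ and must be shown to inherit the real-pair structure from $\BSg$), to require the most care. Once all the spaces are placed under the single umbrella of Definition \ref{realpair}, the three assertions of the theorem are formal consequences of the general result; the anomalous case $n=1$ is handled by replacing $\hat E(1)^*(Z)$ with $E(1)^0(Z)$ as in Remark \ref{anomaly} and running the same argument.
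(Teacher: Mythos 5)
Your high-level reduction is the same as the paper's: exhibit each $Z$ as one half of a Landweber flat real pair and quote the base-change result (Theorem \ref{main2}), whose Bockstein spectral sequence mechanism you describe accurately. The gap is in the step you yourself flag as the main obstacle, and the route you propose there would not work. First, the equivariant model $Y$ cannot be a ``Real Eilenberg--MacLane space'': Definition \ref{realpair} requires $Y$ to have the weak projective property with respect to $\E(n)$, which is only ever verified (via Theorem \ref{main1}) for spaces with the projective property, i.e.\ equivariant $H$-spaces whose mod $2$ homology is generated by the image of a wedge of smash powers of $\C P^\infty$ with the conjugation action; Eilenberg--MacLane spaces are not of this type. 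The paper instead takes $Y=\BPRm{2m-1}{(2^{2m}-1)(1+\alpha)}$ for $Z=\EM$ (and $\BPRm{1}{k(1+\alpha)}\simeq\BUg,\BSUg,\BUg\langle 6\rangle$ for the connective covers), and produces $g\colon Z\to Y^{\Z/2}$ by splitting $\EM$ off the homotopy fixed points of $\BPRm{0}{2m\alpha+1}$ via a Postnikov-tower argument and composing with the tower of fibers $\delta_k$ of $v_k$-multiplication; surjectivity of $(i\circ g)^*$, condition (i), then comes from the Ravenel--Wilson(--Yagita) Morava $K$-theory computations transported to $E(n)$ by Lemma \ref{K(n)toE(n)}.

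Second, your plan never produces the factorization demanded by condition (ii) of Definition \ref{realpair}; checking ``Landweber flatness of the Real Johnson--Wilson cohomology of the equivariant models'' is not one of the axioms and would not yield it. The paper's device is a second projective space $Y_1$ and a sequence $Z\to Y\to Y_1$ (with $h\colon Y\to Y_1$ given by $v_{2m-1}$-multiplication, resp.\ $1-c$ or its lift $\lift{(1-c)}$) such that $h\circ(i\circ g)$ is equivariantly null and the sequence is left exact in Morava $K$-homology for all $m>0$; this yields $E(n)^*(Z)\cong E(n)^*(Y)/\langle \im h^*\rangle$ as completed algebras, which is exactly what forces $\chi_Z$ to factor through $E(n)^{2*}(Z)$. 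The cases $\BSg$ (where $1-c$ must be lifted to $\BUg\langle 6\rangle$ and the null-homotopy of the lifted composite requires a separate connectivity argument on $\underline{\B\D}\langle 0 \rangle_{1+2\alpha}^{h\Z/2}$) and $\widetilde{\BSg}$ (where left exactness in $K(m)$-homology is extracted from a $3\times 3$ diagram of fibrations using \cite{KLW}) each need arguments your proposal does not supply. Without these ingredients the verification of Definition \ref{realpair} does not go through.
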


\bigskip
\begin{remark}\label{E(n)}
For the spaces $Z$ above, much is known about their $E(n)$-cohomology.  For example, for the $\Ek(\Z,m)$, the Morava K-theory is known from \cite{RW}.  Then \cite{RWY} goes on to show that this implies $BP^*(\Ek(\Z,m))$ and $E(n)^*(\Ek(\Z,m))$ are Landweber flat.  From this it is straightforward to compute the $E(n)$-cohomology completed at the ideal $I=(2, v_1, \dots,v_{n-1})$.  Things are more complicated without completion.  Following \cite{RWY}, there are spaces and maps $\Ek(\Z,m) \longrightarrow X_m \longrightarrow Y_m$ such that $E(n)^*(\Ek(\Z,m))$ is the algebra quotient of $E(n)^*(X_m)$ by the image of $E(n)^*(Y_m)$, where $E(n)^*(X_m)$ and $E(n)^*(Y_m)$ are both power series rings on generators of known degrees (see \cite{KLW} for the analogous sequences for connective covers of $\BOg$).  The spaces $X_m$ and $Y_m$ are torsion-free spaces with well understood polynomial cohomology.  In principle, the map is computable.  Practice is another thing (see \cite[Section 8.5]{RWY}).  The difficulties involved with any attempt to describe $E(n)^*(\Ek(\Z,m))$ more explicitly were what led to the definition of Landweber flat real pairs below (Definition \ref{realpair}), which is a means of transferring our knowledge of $E(n)$-cohomology over to $ER(n)$-cohomology.
\end{remark}

\medskip
\begin{remark}
Since $ER(2)^*(\Ek(\Z, m+1))$ and $ER(2)^*(\Ek(\Z/2^q, m))$ are trivial if $m \geq 2$, the present paper together with the computation of $ER(2)^*(\mathbb{C}P^\infty)$ in \cite{Lor16} and the computation of $ER(2)^*(\Ek(\Z/2^q,1))$ in \cite{KLW16a} 
completely describes the $ER(2)$-cohomology of Eilenberg-MacLane spaces.
\end{remark}

\medskip
\begin{remark}
As shown in \cite{HM}, the theory $ER(2)$ is (additively) equivalent to a version of topological modular forms with level structure, $TMF_0(3)$, after 
suitable completion.
\end{remark}

\medskip 
\noindent
We also relate our computations by means of some short exact sequences.

\medskip
\begin{defn}Given maps of graded, augmented, topological $ER(n)^*$-algebras \\* $A \lra B \lra C$, we say that the sequence $1 \lra A \lra B \lra C \lra 1$ is a \emph{short exact sequence of completed graded $ER(n)^*$-algebras} if the following is a short exact sequence of $ER(n)^*$-modules:
\[ 0 \lra B \hat{\otimes} I(A) \longrightarrow B \lra C \lra 0, \]
where $I(A)$ denotes the augmentation ideal of $A$, and the completed tensor product is taken over $ER(n)^*$. 
\end{defn}

The values of $ER(n)$-cohomology on the above spaces are compatible in the following sense:

\begin{thm} \label{main4}
There exist short exact sequences of completed $ER(n)^*$-algebras:
$$ 1 \lra ER(n)^*(\Ek(\Z/2,1)) \lra ER(n)^*(\BOg) \lra ER(n)^*(\BSOg) \lra 1, $$
$$ 1 \lra ER(n)^*(\Ek(\Z/2,2)) \lra ER(n)^*(\BSOg) \lra ER(n)^*(\BSg) \lra 1, $$
$$ 1 \lra ER(n)^*(\BSg) \lra ER(n)^*(\widetilde{\BSg}) \lra ER(n)^*(\Ek(\Z,3)) \lra 1, $$
$$ 1 \lra ER(n)^*(\Ek(\Z/2,3)) \lra ER(n)^*(\widetilde{\BSg}) \lra ER(n)^*(\BOg\langle 8 \rangle) \lra 1. $$ 
\end{thm}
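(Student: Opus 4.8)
The plan is to realize each of the four sequences as the $ER(n)$-cohomology of a fibration and to obtain it by base change from an analogous $E(n)$-theoretic sequence. First I would match the four sequences with the fibrations $\BSOg \to \BOg \to \Ek(\Z/2,1)$ (classifying $w_1$), $\BSg \to \BSOg \to \Ek(\Z/2,2)$ (classifying $w_2$), $\Ek(\Z,3) \to \widetilde{\BSg} \to \BSg$ (the fiber sequence of $p_1\colon \BSg \to \Ek(\Z,4)$, extended one step to the left), and $\BOg\langle 8 \rangle \to \widetilde{\BSg} \to \Ek(\Z/2,3)$ (the map detecting the bottom class of $\pi_3\widetilde{\BSg} = \Z/2$, whose fiber is $7$-connected since $\pi_4\widetilde{\BSg} = \cdots = \pi_7\widetilde{\BSg} = 0$). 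In every case the convention of the preceding definition places the cohomology of the base as the subalgebra $A$ and that of the fiber as the quotient $C$, with the total space in the middle; the structure maps are those induced by the projection $E \to B$ and the fiber inclusion $F \hookrightarrow E$.

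Second, I would establish the corresponding statement over $\hat{E}(n)^*$. Because the hat construction only rescales degrees, it is natural in maps of spaces ($f^*\hat{z} = \widehat{f^*z}$) and carries ring-theoretic data---ideals, quotients, freeness---faithfully from $E(n)^*(-)$ to $\hat{E}(n)^*(-)$; so it suffices to prove, for each fibration $F \to E \to B$, that $0 \to E(n)^*(E)\,\hat{\otimes}\,I(E(n)^*(B)) \to E(n)^*(E) \to E(n)^*(F) \to 0$ is exact, that is, that $E(n)^*(E)$ is free over $E(n)^*(B)$ with the fiber restriction realizing its quotient by the base's augmentation ideal. For the spaces at hand the $E(n)$-cohomology rings are even and Landweber flat and are explicitly described in \cite{RW, RWY, KLW}; from these descriptions the classes pulled back from the base are non-zero-divisors and the fiber restriction is surjective with the expected kernel, which is what the collapse of the relevant Serre-type spectral sequence provides and what the explicit rings confirm.

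Third, I would apply $ER(n)^* \otimes_{\hat{E}(n)^*}(-)$ and use Theorem \ref{main3} to identify each base-changed term, $ER(n)^*(Z) \cong ER(n)^* \otimes_{\hat{E}(n)^*} \hat{E}(n)^*(Z)$. Since augmentations are preserved, $I(ER(n)^*(B)) \cong ER(n)^* \otimes_{\hat{E}(n)^*} I(\hat{E}(n)^*(B))$, and base-changing the exact sequence of the second step yields the four claimed sequences of completed $ER(n)^*$-algebras---provided base change is exact and commutes with the completed tensor product.

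The main obstacle is exactly that proviso. Exactness requires the flatness of $ER(n)^*$ over $\hat{E}(n)^*$ that underlies Theorem \ref{main3}, while commuting base change past $\hat{\otimes}$ requires a cofinality argument matching the adic topologies before and after the change of rings, so that $ER(n)^* \otimes_{\hat{E}(n)^*}\bigl(\hat{E}(n)^*(E)\,\hat{\otimes}\,I(\hat{E}(n)^*(B))\bigr) \cong ER(n)^*(E)\,\hat{\otimes}_{ER(n)^*}\,I(ER(n)^*(B))$. Once this commutation is secured the four sequences follow uniformly; the only place I expect to have to engage the explicit generators is in verifying the $E(n)$-level exactness for the two fibrations built on $\widetilde{\BSg}$, whose base-fiber structure is the least standard and for which I would lean on the calculations of \cite{KLW}.
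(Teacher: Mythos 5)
Your skeleton (match the four sequences to the fibrations, prove exactness at the level of $E(n)^*$, then transfer to $ER(n)^*$) agrees with the paper's, and your identification of base/total space/fiber with $A/B/C$ is correct. The transfer step, however, rests on a false premise: $ER(n)^*$ is \emph{not} flat over $\hat{E}(n)^*$. The ring $\hat{E}(n)^*$ is $2$-torsion-free while $ER(n)^*$ contains the nonzero $2$-torsion class $x$, so $ER(n)^*\otimes_{\hat{E}(n)^*}(-)$ cannot be exact on arbitrary modules; and Theorem \ref{main3} is not ``underlain by'' such flatness --- it is underlain by Landweber flatness of the \emph{modules} $\hat{E}(n)^*(Z)$ (exactness against finitely presented comodules), fed through the Bockstein spectral sequence via Theorem \ref{ss}. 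So you cannot simply base-change your $E(n)$-level sequence and be done. The paper instead proves a dedicated lemma: given a sequence $Z_3\to Z_2\to Z_1$ short exact in $E(n)^*(-)$ with $Z_1,Z_2$ parts of Landweber flat real pairs and $E(n)^*(Z_3)$ Landweber flat, one obtains a short exact sequence of Bockstein spectral sequences on \emph{every} page $E_r$ (here exactness of $E_r(pt)\otimes_{\hat{E}(n)^*}(-)$ holds because $E_r(pt)$ is a finitely presented comodule and the $\hat{E}(n)^*(Z_i)$ are Landweber flat, with the identification $E_r(Z_3)\cong E_r(pt)\otimes_{\hat{E}(n)^*}\hat{E}(n)^*(Z_3)$ deduced from surjectivity of permanent cycles out of $Z_2$), and then assembles the abutments by the snake lemma and induction over the finite filtration. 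That spectral-sequence comparison is the missing idea that replaces flatness.

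A second, related gap: you invoke Theorem \ref{main3} to identify all three base-changed terms, but $\BOg\langle 8 \rangle$ is not known to be part of a Landweber flat real pair for $n>2$, and the paper explicitly disclaims the isomorphism $ER(n)^*(\BOg\langle 8 \rangle)\cong ER(n)^*\otimes_{\hat{E}(n)^*}\hat{E}(n)^*(\BOg\langle 8 \rangle)$ in that range (Remark \ref{chooseY}); the lemma above is engineered precisely so that the quotient term needs only Landweber flat $E(n)$-cohomology. As written, your argument for the fourth sequence would fail for $n>2$. Finally, your justification of the $E(n)$-level exactness by collapse of a Serre-type spectral sequence and inspection of explicit rings is not what the paper does (and Remark \ref{E(n)} warns that the explicit rings are impractical): the paper derives the sequence $0\to \tilde{E}(n)^*(Z_{2+}\wedge Z_1)\to \tilde{E}(n)^*(Z_{2+})\to \tilde{E}(n)^*(Z_{3+})\to 0$ from the short exact sequences of bicommutative Hopf algebras in Morava $K$-theory established in \cite{KLW}, converted to $E(n)^*$ by Lemma \ref{K(n)toE(n)}, with the completed tensor description coming from the Kunneth isomorphism of Theorem \ref{Kunneth}.
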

\smallskip 

\begin{remark} \label{Pontrjagin}
Note in particular that for $n \leq 2$, the spaces $\BOg \langle 8 \rangle$ and $\widetilde{\BSg}$ are $ER(n)$-equivalent. This follows from the fact that $\Ek(\Z/2, 3)$ is $E(n)$ and hence $ER(n)$-acyclic for $n \leq 2$. 
\end{remark}

\begin{remark} 
Laures and Olbermann \cite{LO} have essentially established the above result for the $TMF_0(3)$-cohomology of $Z = \BOg\langle 8 \rangle$ using techniques of \cite{KW2}. However, their result only holds after $K(2)$-localization.
\end{remark}

\section{Preliminaries and the general result}

\medskip
We begin by recalling some definitions and notation. We will work with both $\Z/2$-spaces and $\Z/2$-spectra. Unless noted otherwise, we will use the letter $Y$ to denote a $\Z/2$-space. $Z$ will also denote a space, but usually with trivial $\Z/2$-action. When we refer to fixed points or homotopy fixed points, we will always mean with respect to the group $\Z/2$.

\medskip
\noindent
A (genuine) $\Z/2$-equivariant spectrum $\E$ is a family of $\Z/2$-spaces $\underline{\E}_{a+b\alpha}$, indexed over elements $a+b\alpha$ of the real representation ring of $\Z/2$, $RO(\Z/2)$, where $\alpha$ denotes the sign representation, together with a compatible system of equivariant homeomorphisms
$$\xymatrix{ \underline{\E}_{a-r+(b-s)\alpha} \ar@{->}[r]^-{\simeq} & \Omega^{r+s\alpha} \underline{\E}_{a+b\alpha}}$$
where the right hand side denotes the space of pointed maps (endowed with the conjugation action) from the one point compactification of the representation $r+s\alpha$. The reader may refer to \cite{HK} for more details on $\Z/2$-equivariant spectra. We will denote by $ER$ the homotopy fixed
point spectrum of the $\Z/2$-action on $\E$. A canonical example of a $\Z/2$-spectrum is  Real cobordism, $\MU$, whose underlying nonequivariant spectrum is complex cobordism, $MU$, studied first by Landweber \cite{Lan68} and Araki and Murayama \cite{AM78, Ara79a, Ara79b}, and more recently by Hu-Kriz \cite{HK}. The action of $\Z/2$ is induced by the complex conjugation action on the pre-spectrum representing $\MU$ in the usual way. 

\medskip
\noindent
The $p=2$ Johnson-Wilson theory $E(n)$
lifts to a $\Z/2$-spectrum, $\E(n)$, defined as an $\MU$ module by coning off certain equivariant lifts of the Araki generators $v_i$ for $i > n$, and then inverting the lift of $v_n$. We shall call these equivariant lifts by the same names, $v_i$. The Real Johnson-Wilson theories, $ER(n)$, are defined as the homotopy fixed points of $\E(n)$. In addition to $\E(n)$, we will also make use of the $\Z/2$-spectra $\BP\langle n \rangle$, whose underlying nonequivariant spectra are the truncated Brown-Peterson theories $BP\langle n \rangle$ (with coefficients $BP\langle n \rangle_*=\Z_{(2)}[v_1, \dots, v_n]$). In particular, we will be interested in the spaces which comprise these spectra, $\BPRm{n}{s+t\alpha}
$ for various $n, s,$ and $t$.

\medskip
\noindent
The bi-degrees of the equivariant lifts are given by $|v_i| = (2^i-1)(1+\alpha)$. By diagonal classes, we will mean homotopy classes in bi-degrees of the form $k(1+\alpha)$. In \cite[Claim 4.1]{Nitufib}, the first and third authors observed an invertible class $y(n) \in \pi_{\lambda + \alpha} \, \E(n)$, with 
$\lambda=\lambda(n) = 2^{n+2}(2^{n-1}-1)+1$. 
We will use this class to shift diagonal classes to integral degrees as we will now describe.

\medskip
\noindent
Working with cohomological grading, let $\E$ be a $\Z/2$-equivariant ring spectrum with underlying spectrum $E$. Assume that $Y$ is a $\Z/2$-space and let $\E^{\ast(1+\alpha)}(Y)$ denote the sub-ring of diagonal elements in the equivariant $\E$-cohomology of $Y$ i.e. $\E^{\ast(1+\alpha)}(Y) := \pi_0\Maps^{\Z/2}(Y,\underline{\E}_{\ast(1+\alpha)})$.
\medskip
\noindent

\begin{remark}\label{remark:mult}
In \cite[Theorem 1.2]{KLW16b} we show that, restricted to the category of equivariant spaces, $\E(n)$ naturally gives rise to a multiplicative cohomology theory valued in commutative $\MU$-algebras. We further show that the map given by forgetting the $\Z/2$-action 
$$\rho: \E^{\ast(1+\alpha)}(n)(-) \longrightarrow E^{\ast}(n)(-)$$
is an algebra homomorphism over $\MU^{\ast(1+\alpha)}(-)$. Note that a stronger statement appears in \cite[Comment 5 on page 349]{HK} where the authors claim that $\E(n)$ is a homotopy commutative, homotopy associative ring spectrum. Even though we have not been able to verify that statement, the weaker multiplicative structure described in \cite{KLW16b} suffices to prove all results in this document as well as those in previous articles by the authors.
\end{remark}

Now, given a class $z \in \mathbb{E}(n)^{k(1+\alpha)}(Y)$, we may define the product $\widehat{z}:=zy(n)^{k}$, which lives in integral degree $k(1-\lambda)$. Since the underlying nonequivariant class of $y(n)$ is $v_n^{2^n-1}$, this map is an equivariant lift of the endomorphism given in Definition \ref{defhat} in the following sense. Letting $\rho : \E^{\ast(1+\alpha)}(Y) \longrightarrow E^{\ast}(Y)$ denote the forgetful ring homomorphism as in the remark above, we notice that the image of $\rho$ belongs to the graded sub-group of elements in even degree. We now have the commutative diagram
$$\xymatrix{\E(n)^{*(1+\alpha)}(Y) \ar@{->}[r]^-{\widehat{ }} \ar@{->}[d]^-{\rho} & \E(n)^{*(1-\lambda)}(Y) \ar@{->}[d]^-{\rho} & \text{if }|z|=k(1+\alpha),\text{ }z\mapsto \widehat{z}:=zy(n)^k\\
E(n)^{2*}(Y) \ar@{->}[r]^-{\widehat{ }} & E(n)^{*(1-\lambda)}(Y) &  \text{if }|z|=2k,\text{ }z\mapsto \widehat{z}:=zv_n^{k(2^n-1)}}$$}

\noindent
\begin{defn} \label{completion2}
 For a space or spectrum $Y$, we call $\Maps(E\Z/2_+, Y)$ the completion of $Y$ and say that $Y$ is complete if the map $Y \longrightarrow \Maps(E\Z/2_+, Y)$ is a $\Z/2$-equivalence. 
\end{defn}

\medskip
\noindent
As shown by Hu and Kriz in \cite[Theorem 4.1]{HK}, 
the spectra $\E(n)$ are complete. Thus, the homotopy 
fixed point spectra $ER(n) := \E(n)^{h\Z/2}$ agree with the genuine fixed point spectra $\E(n)^{\Z/2}$. By definition we have: $\pi_\ast ER(n) = \pi_{*+0\alpha} \E(n)$. It is shown in \cite[Section 4]{Nitufib} that the homotopy ring of $ER(n)$ is a subquotient of a ring:
\[ \Z_{(2)}[x, \hat{v}_1, \hat{v}_2, \ldots, \hat{v}_{n-1},v_n^{\pm 1}], \] 
where $x$ is an element of $\pi_\lambda ER(n)$. It is $2$-torsion and $x^{2^{n+1}-1}=0$. The classes $\hat{v}_k \in ER(n)_{2^{n+2}(1-2^{n-1})(2^k-1)}$ for $k \leq n$ are the hatted representatives of the diagonal Araki classes mentioned above. The class $\hat{v}_k$ maps to the class  $v_k v_n^{-(2^n-1)(2^k-1)}$ 
under the canonical map from $ER(n)_\ast$ to $E(n)_\ast$. 

\medskip
\noindent
The following property will be essential to generating permanent cycles in our computations.

\smallskip
\begin{defn} A $\Z/2$-space $Y$ is said to have the weak projective property with respect to a $\Z/2$-spectrum $\E$ if the forgetful map $\rho$:
\[ \rho : \E^{\ast(1+\alpha)}(Y) \longrightarrow E^{2\ast}(Y), \]
is an isomorphism of graded abelian groups. 
\end{defn}

\smallskip

\noindent
Let us relate spaces with the weak projective property to the definitions from \cite{NituER2}:

\medskip
\begin{defn} A $\Z/2$-space $X$ is said to be projective if 
\begin{enumerate}
\item $H_*(X; \Z)$ is of finite type. 
\item $X$ is homeomorphic to $\bigWedge_I ({\C P^\infty})^{\Smash k_I}$ 
for some weakly increasing sequence of integers $k_I$, 
with the $\Z/2$ action given by complex conjugation. 
\end{enumerate}
\end{defn}

\smallskip 

By a $\Z/2$-equivariant $H$-space, we shall mean an $H$-space whose multiplication map is $\Z/2$-equivariant.
\medskip 

\begin{defn} A $\Z/2$-equivariant $H$-space $Y$ is said to have the projective 
property if there exists a projective space $X$, along 
with a pointed $\Z/2$-equivariant map $f : X \longrightarrow Y$, 
such that $H_*(Y;\Z/2)$ is generated as an algebra by the image of $H_*f$.
\end{defn} 

\noindent
Spaces with the projective property are not rare. Examples include $\underline{\MU}_{k(1+\alpha)}$,
$\underline{\B\D}_{k(1+\alpha)}$, and
$\underline{\B\D}\langle n \rangle_{k(1+\alpha)}$ where this last is
only for $k < 2^{n+1}$ (see \cite[Theorem 1.3]{RW2}, \cite[Theorem 2.2]{HH}, and the remarks following Definition 1.4 in \cite{NituER2}).  These spaces are known to be complete (see \cite[Theorem 4.1]{HK} and the appendix in \cite{Kitch-Wil-split}). 

\smallskip

\noindent
As suggested by our notation, spaces with the projective property also have the weak projective property:

\medskip
\begin{thm} \label{main1}
Let $Y$ be a $\Z/2$-equivariant $H$-space with the projective property. Let $\E$ denote any complete $\MU$-module spectrum with underlying spectrum $E$, satisfying the property that the forgetful map: $\rho^\ast : \E^{\ast(1+\alpha)} \longrightarrow E^{2\ast}$, is an isomorphism. Then the space $Y$ has the weak projective property with respect to $\E$. In other words, the following map is an isomorphism of $\MU^{\ast(1+\alpha)}$ modules: 
\[ \rho : \E^{\ast(1+\alpha)}(Y) \longrightarrow E^{2\ast}(Y). \]
\end{thm}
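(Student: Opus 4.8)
The plan is to reduce the statement to the case of projective spaces, where the forgetful map can be computed explicitly from the Real orientation of $\MU$, and then to bootstrap from projective spaces to a general $H$-space $Y$ with the projective property by exploiting the multiplication on $Y$ together with the hypothesis that $H_*(Y;\Z/2)$ is algebra-generated by the image of a map $f\colon X\to Y$ from a projective $X$. Throughout, the only nonformal input is the assumed coefficient isomorphism $\rho^\ast\colon \E^{\ast(1+\alpha)}\to E^{2\ast}$; completeness of $\E$ is what guarantees that the (co)limits and spectral sequences used below converge.

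First I would settle the base case $X=\C P^\infty$. Since $\MU$ is Real oriented, its restriction to diagonal degrees gives a class $\bar x\in\MU^{1+\alpha}(\C P^\infty)$ with $\MU^{\ast(1+\alpha)}(\C P^\infty)\cong\MU^{\ast(1+\alpha)}[[\bar x]]$, and forgetting the action sends $\bar x$ to the ordinary orientation class $x$ with $E^{2\ast}(\C P^\infty)\cong E^{2\ast}[[x]]$. Because $\E$ is an $\MU$-module, the same power-series description holds for $\E^{\ast(1+\alpha)}(\C P^\infty)$, and under these identifications $\rho$ is simply the $\rho^\ast$-linear extension of $\bar x\mapsto x$; hence it is an isomorphism as soon as $\rho^\ast$ is. A completed K\"unneth formula promotes this to all smash powers $(\C P^\infty)^{\Smash k}$, and since both theories send wedges to products, the finite-type hypothesis in the definition of a projective space lets me conclude that $\rho$ is an isomorphism for every projective space $X$. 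Note that this handles all $\MU$-modules $\E$ at once, so no separate base-change argument is needed.

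The bootstrapping step is where the $H$-space structure enters. Using the multiplication $\mu$ on $Y$ and the map $f$, I would form the iterated composites $X^{\times k}\to Y^{\times k}\xrightarrow{\mu} Y$; since $H_*(Y;\Z/2)$ is generated as an algebra under the Pontryagin product by the image of $H_*f$, these composites are jointly surjective on mod-$2$ homology, so the induced map from the free (James/symmetric) $H$-space construction on $X$ is a mod-$2$ homology surjection. I would then compare the Atiyah--Hirzebruch-type spectral sequences for $\E^{\ast(1+\alpha)}(Y)$ and $E^{2\ast}(Y)$ through the map induced by $\rho$: the coefficient isomorphism identifies their inputs along the diagonal degrees, the base case shows that the algebra generators coming from $X$ are matched and are permanent cycles on both sides, and the multiplicativity inherited from $\MU$ (together with the $H$-space diagonal) propagates this matching off the generators. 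This forces $\rho$ to be an isomorphism of spectral sequences and hence on the abutment.

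The hard part will be the equivariant bookkeeping in this last step. The action on $Y$ is nontrivial, since it comes from complex conjugation on the copies of $\C P^\infty$ inside $X$, so the relevant spectral sequence for $\E^{\ast(1+\alpha)}(Y)$ is genuinely equivariant, and the main technical point is to set it up so that restricting to the diagonal line $\ast(1+\alpha)$ produces an input that $\rho^\ast$ identifies with the ordinary $E^{2\ast}$-input. A second delicate point is upgrading ``generation in $H_*(Y;\Z/2)$'' to an honest isomorphism: one must rule out exotic differentials and resolve extension problems, and this is precisely where the strength of the projective property --- that the generators come from copies of $\C P^\infty$ carrying an explicit Real orientation whose image under $\rho$ is understood --- does the work. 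Completeness of $\E$ handles convergence, so once the generators are controlled the comparison closes.
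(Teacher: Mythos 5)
Your overall skeleton --- settle projective spaces first via the Real orientation, then bootstrap to $Y$ using the James/free $H$-space construction on $X$ --- is exactly the paper's, and your base case (collapse of the equivariant Atiyah--Hirzebruch spectral sequence for $(\C P^\infty)^{\Smash k}$ and wedges thereof, using that $\E$ is an $\MU$-module) is fine. The genuine gap is in the transfer from $JX$ to $Y$, which you yourself flag as ``the hard part'' and then do not resolve. Your proposed mechanism is to compare equivariant and nonequivariant AHSS-type spectral sequences \emph{for $Y$ itself} and argue that multiplicativity propagates the matching off the generators. This does not work as stated: $Y$ (e.g.\ $\underline{\MU}_{k(1+\alpha)}$) has no given equivariant cell structure with cells in diagonal representations, so there is no a priori identification of the equivariant input with the nonequivariant one; moreover your generation hypothesis lives in mod-$2$ \emph{homology} under the Pontryagin product, while the claim is about \emph{cohomology}, and a homology surjection from $JX$ does not by itself control differentials or extensions in a cohomological spectral sequence for $Y$. ``Ruling out exotic differentials'' is precisely the content of the theorem, so deferring it to ``the strength of the projective property'' is circular.

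The missing idea, which is how the paper closes this step, is to work in $\MU$-\emph{homology} and produce an explicit splitting at the level of module spectra. Concretely: the James map $\lift{f}:JX\to Y$ is a $\Z/2$-equivariant $H$-map surjective on $H_*(-;\Z_{(2)})$, so $MU_*(JX)\to MU_*(Y)$ is a split surjection of free $MU_*$-modules; one chooses a basis $\{\gamma_i\}$ of a complement on which $MU_*(\lift f)$ is an isomorphism, and then lifts these $\gamma_i$ to \emph{diagonal} classes in $\MU_\star(JX)$ using the equivariant stable splitting $\Sigma^\infty JX\simeq\bigvee_j(\Sigma^\infty X)^{\Smash j}$ and the collapse of the equivariant AHSS for projective spaces. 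This yields an equivariant map of $\MU$-module spectra $\bigvee_i\Sigma^{\beta_i(1+\alpha)}\MU\to Y\wedge\MU$ that is a nonequivariant equivalence; smashing with $E\Z/2_+$ makes it an equivariant equivalence, and mapping out of it into the \emph{complete} spectrum $\E$ exhibits $\E^\star(Y)$ as a free $\E^\star$-module on diagonal generators, from which $\rho:\E^{\ast(1+\alpha)}(Y)\to E^{2\ast}(Y)$ is an isomorphism. Without some version of this homological splitting-and-realization argument (or an equivalent device), your comparison of spectral sequences for $Y$ does not go through.
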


\smallskip

\begin{remark}\label{completion}
Notice that if $Y$ is complete, then its homotopy fixed points, $Y^{h\Z/2}$, are equivalent to its fixed points, $Y^{\Z/2}$. Consequently, for a complete space $Y$, the (equivariant) inclusion $Y^{\Z/2} \subseteq Y$ may be replaced by the inclusion of the homotopy fixed points $Y^{h\Z/2}$ (endowed with the trivial action) into the  space $Y$. This offers us a technical advantage since it allows us to invoke the homotopy fixed point spectral sequence (see Section \ref{sec:examples}). We will therefore implicitly assume for the the rest of the paper that all spaces $Y$ with the projective property are already complete. Indeed, any possibly uncompleted spaces $Y$ with the projective property can be completed without changing $\E^{*(1+\alpha)}(Y)$ or $E^{2\ast}(Y)$. So one may complete a space with the projective property (unless already complete) in order to ensure the technical requirement above.
\end{remark}

\noindent
Two complete spectra that satisfy the conditions of Theorem \ref{main1}, $\Maps(E\Z/2_+, \B\D\langle n \rangle)$ and $\E(n)$, will be relevant to us in this paper. 

\medskip
\noindent
Now given a space $Y$ with the weak projective property, we may invert $\rho$ and then `hat' elements to construct an isomorphism of rings that scales degrees by $(1-\lambda)/2$:
\[ \psi_Y : E(n)^{2\ast}(Y)  \longrightarrow \E(n)^{\ast(1-\lambda)}(Y), \quad \quad z \longmapsto \widehat{\rho^{-1}(z)} \]
Restricting to $Y=pt$, we observe that $\psi(v_k):=\psi_{pt}(v_k)=\hat{v}_k$ for $k \le n$. 
Let $\hat{E}(n)^* \subseteq ER(n)^*=\mathbb{E}(n)^*(pt)$ denote the image of $\psi$. Then $\psi$ gives an abstract isomorphism of rings (but not graded rings) between $E(n)^*$ and $\hat{E}(n)^*$.

\medskip
\begin{remark}
As in Remark \ref{anomaly}, we have to treat the case $n=1$ separately. In this special case, we define the domain of $\psi_Y$ to be $E(1)^0(Y)$ in order to ensure that the map is injective and the following theorems still hold. 
\end{remark} 

\medskip
\noindent
We now get to the main definition of this article:

\medskip
\begin{defn} \label{realpair}
Let $Z$ be a (nonequivariant) space. Suppose there exists a $\Z/2$-equivariant space $Y$ with the weak projective property with respect to $\E(n)$, equipped with a map $g : Z \longrightarrow Y^{\Z/2}$ which satisfies the following properties:
\begin{enumerate}[(i)]
\item The composite $\xymatrix{Z \ar@{->}[r]^-g & Y^{\Z/2} \ar@{->}[r]^-i & Y}$, where $i$ denotes the inclusion of fixed points, induces a surjection on $E(n)^*(-)$.
\item The composite $\chi_Z$, defined as the restriction of $\psi_Y$ above along $(i \circ g)^*$,
$$\xymatrix{E(n)^{2\ast}(Y) \ar@{->}[r]^-{\psi_Y} & \E(n)^{\ast(1-\lambda)}(Y) \ar@{->}[r]^-{(i \circ g)^*} & \E(n)^{\ast(1-\lambda)}(Z) \ar@{=}[r] & ER(n)^{\ast(1-\lambda)}(Z)}$$
factors through the map $\xymatrix{E(n)^{2\ast}(Y) \ar@{->}[r]^-{(i \circ g)^*} & E(n)^{2 \ast}(Z)}$. 

\noindent
Recall that for $n=1$, by definition, we restrict the domain of $\psi_Y$ to $E(1)^0(Y)$. 

\end{enumerate}
Then we call the pair $(Y,Z)$ a \emph{real pair with respect to $E(n)$}. If in addition, the $2$-completed $BP$ cohomology of $Z$ is Landweber flat, then we say that the real pair is \emph{Landweber flat}.
\end{defn}



\medskip
\noindent
The main result of this article is now straightforward to state.

\medskip
\begin{thm} \label{main2}
Let $(Y,Z)$ be a Landweber flat real pair with respect to $E(n)$. Let $\hat{E}(n)^*(Z)$ denote the image of $\chi_Z$ (above) and let $\varphi_Z: E(n)^{2\ast}(Z) \longrightarrow ER(n)^{\ast(1-\lambda)}(Z)$ denote the factorization. Then $\varphi_Z$ is injective and induces an isomorphism of algebras:
\[ ER(n)^* \otimes_{\hat{E}(n)^*} \hat{E}(n)^*(Z) \longrightarrow ER(n)^*(Z). \]
Note that the notation for the subalgebra $\hat{E}(n)^*(Z)$ of $ER(n)^*(Z)$ does not conflict with the subalgebra of $E(n)^*(Z)$ of the same name in Definition \ref{defhat} as the two subalgebras are isomorphic via $ER(n)^*(Z) \longrightarrow E(n)^*(Z)$.
\end{thm}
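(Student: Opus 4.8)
The plan is to run the $x$-Bockstein spectral sequence (BSS) of \cite{Nitufib, NituP, NituP2} for the space $Z$ and to compare it, one page at a time, with the coefficient BSS for a point, showing that the former is the flat base change of the latter along $\hat{E}(n)^* \to \hat{E}(n)^*(Z)$. Recall that this BSS arises from the fundamental cofibration relating $ER(n)$ and $E(n)$ through the class $x \in \pi_\lambda ER(n)$ of \cite{Nitufib}; because $x$ is nilpotent, with $x^{2^{n+1}-1}=0$, the resulting $x$-adic filtration on $ER(n)^*(-)$ is finite. Hence the spectral sequence converges strongly with no $\lim^1$ terms and $E_\infty = \gr_x ER(n)^*(-)$. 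Write $E_r$ and $E_r(Z)$ for the BSS of a point and of $Z$ respectively; for $Z=pt$ it computes $\gr_x ER(n)^*$ from $E_1 = E(n)^*$. Since the BSS is multiplicative and natural, the structure map $Z \to pt$ makes each $E_r(Z)$ an algebra over $E_r$ compatibly with the differentials.

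Before the main comparison, injectivity of $\varphi_Z$ is immediate. By the commutative square in the excerpt, the composite $\rho \circ \varphi_Z$ is the hat map $E(n)^{2*}(Z) \to E(n)^{*(1-\lambda)}(Z)$, $z \mapsto z\,v_n^{k(2^n-1)}$, which is injective for $n>1$ (the case $n=1$ being covered by Remark \ref{anomaly}); therefore $\varphi_Z$ is injective and $\hat{E}(n)^*(Z) = \im(\varphi_Z) \cong E(n)^*(Z)$. Crucially, every element of $\hat{E}(n)^*(Z)$ is by definition an honest class in $ER(n)^*(Z)$, hence a permanent cycle of the BSS for $Z$ lying in $x$-filtration zero; this is precisely what condition (ii) of Definition \ref{realpair} provides.

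The heart of the argument is to turn these permanent cycles into a page-by-page base change. Condition (i), together with the weak projective property of $Y$ (so that $\psi_Y$ is an isomorphism by Theorem \ref{main1}), shows that $\hat{E}(n)^*(Z)$ generates $E(n)^*(Z)$ over the coefficients $E(n)^*$, i.e. $E(n)^*(Z) = E(n)^* \cdot \hat{E}(n)^*(Z)$. Landweber flatness then promotes this to a flat tensor decomposition $E_1(Z) \cong E_1 \otimes_{\hat{E}(n)^*} \hat{E}(n)^*(Z)$, with $\hat{E}(n)^*(Z)$ flat over $\hat{E}(n)^*$. Because the $\hat{E}(n)^*(Z)$-factor consists of permanent cycles and each $d_r$ is a derivation, the differential on $E_r(Z)$ is forced to be $d_r \otimes \id$; flatness of $\hat{E}(n)^*(Z)$ over $\hat{E}(n)^*$ lets the tensor product commute with passage to homology, so by induction $E_r(Z) \cong E_r \otimes_{\hat{E}(n)^*} \hat{E}(n)^*(Z)$ for every $r$, and in the limit $\gr_x ER(n)^*(Z) \cong \gr_x ER(n)^* \otimes_{\hat{E}(n)^*} \hat{E}(n)^*(Z)$. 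Finally I resolve the extension problem: the asserted map $ER(n)^* \otimes_{\hat{E}(n)^*} \hat{E}(n)^*(Z) \to ER(n)^*(Z)$ is the multiplication map, it preserves the $x$-adic filtrations, and (using flatness once more to identify the associated graded of the source) its associated graded is exactly the isomorphism just obtained; since both filtrations are finite, the map itself is an isomorphism of graded algebras.

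I expect the main obstacle to be the third step — certifying that the BSS for $Z$ is the termwise flat base change of the coefficient BSS, i.e. that no differentials beyond the coefficient ones arise. This is exactly the phenomenon the notion of a Landweber flat real pair is engineered to guarantee: condition (ii) manufactures the permanent cycles, condition (i) and the weak projective property guarantee there are enough of them to generate, and Landweber flatness validates the homological bookkeeping (commuting $\otimes$ past homology and past $\gr$). Assembling these into a clean inductive comparison of pages, while keeping track of the non-integral diagonal gradings and the anomalous case $n=1$ of Remark \ref{anomaly}, is where the genuine work lies.
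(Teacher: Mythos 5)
Your overall strategy is the same as the paper's: injectivity of $\varphi_Z$ via composing with $\iota$ to get the hat map $z\mapsto zv_n^{(2^n-1)|z|/2}$; observing that $\hat{E}(n)^*(Z)$ consists of permanent cycles by construction; identifying $\mbox{E}_1(Z)$ with $\mbox{E}_1\otimes_{\hat{E}(n)^*}\hat{E}(n)^*(Z)$; and concluding by comparison of spectral sequences with finite filtration. The paper, however, does not run your page-by-page induction: it simply invokes Theorem \ref{ss} (\cite[Theorem 4.3]{KW2}), which says outright that for $M$ Landweber flat the complex $(M\otimes_{\hat{E}(n)^*}\mbox{E}_*,\id_M\otimes d_*)$ \emph{is} a spectral sequence converging to $M\otimes_{\hat{E}(n)^*}ER(n)^*$, and then notes that the resulting map of spectral sequences into $\mbox{E}_r(Z)$ is an isomorphism on $\mbox{E}_1$, hence on $\mbox{E}_\infty$, hence on abutments.

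The one genuine gap is in your third step, where you assert that ``flatness of $\hat{E}(n)^*(Z)$ over $\hat{E}(n)^*$ lets the tensor product commute with passage to homology.'' Landweber flatness is \emph{not} ordinary flatness: it only guarantees exactness of $M\otimes_{\hat{E}(n)^*}(-)$ on short exact sequences of finitely presented $E(n)_*E(n)$-comodules. So the inductive identification $\mbox{E}_{r+1}(Z)\cong H(\mbox{E}_r,d_r)\otimes\hat{E}(n)^*(Z)$ is not justified by flatness alone; it requires the additional input, established in \cite{KW2}, that the entire coefficient Bockstein spectral sequence (its cycles, boundaries, and homology at every stage) lives in the category of finitely presented $E(n)_*E(n)$-comodules. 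That comodule structure is exactly what Theorem \ref{ss} packages, and it is the reason the paper cites it rather than re-deriving the base change page by page. Your argument becomes correct once you replace the appeal to flatness with an appeal to that theorem (or reprove its comodule-theoretic content); as written, the homological bookkeeping you defer to ``Landweber flatness'' is the step that actually needs the cited result. A second, cosmetic remark: the ``$d_r$ is a derivation forces $d_r\otimes\id$'' step is unnecessary once one has a map of spectral sequences from the tensored-up coefficient BSS, since naturality in $ER(n)^*$-modules already determines the differentials on the source.
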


\medskip

\begin{remark} Notice that as a subalgebra of $ER(n)^*(Z)$, $\hat{E}(n)^*(Z)$ does
depend on the space $Y$. However, as noted above, it is abstractly isomorphic to $E(n)^*(Z)$ after a suitable rescaling of the degrees, and its image as a subalgebra of $E(n)^*(Z)$ does not depend on $Y$. See also Remark \ref{chooseY}.
\end{remark}

\begin{remark}
In the statement of the above theorem we follow the convention introduced in \cite[Definition 1.5]{RWY} of replacing $E(n)$ by its 2-adic completion when $\lim^1 BP(Z^m)$ is non-zero, where $Z^m$ denote the finite skeleta of $Z$. This allows us to eradicate any phantom maps in cohomology. All arguments presented in this article also hold after 2-completion and so we may conveniently suppress the completion from the notation. Additionally, as in \cite{RWY}, for any cohomology theory $E$, we denote by $E^*(Z_1) \hat{\otimes} E^*(Z_2)$ the inverse limit of the tensor products (over $E_*$) of the $E$-cohomology of the (skeletal) filtration quotients of $E^*(Z_1)$ and $E^*(Z_2)$.
\end{remark}

\medskip
\noindent
With the above convention understood, we will also prove the following useful theorem which allows us to generate more computational examples by taking smash products. This lifts the $E(n)$-based Kunneth isomorphism for spaces whose $BP$-cohomology is Landweber flat (which follows from \cite[Theorem 1.8 and Proposition 1.9]{W99}).

\medskip
\begin{thm} \label{Kunneth}
Assume $(Y_a,Z_a)$, and $(Y_b, Z_b)$ are two Landweber flat real pairs with respect to $E(n)$. Then the pair: $(Y_a \wedge Y_b, Z_a \wedge Z_b)$ is a Landweber flat real pair, and the completed Kunneth isomorphism (over the coefficients) holds in (reduced) $ER(n)$-cohomology: 
\[ \tilde{ER}(n)^*(Z_a \wedge Z_b) = \tilde{ER}(n)^*(Z_a) \widehat{\otimes} \tilde{ER}(n)^*(Z_b). \]

\end{thm}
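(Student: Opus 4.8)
The plan is to reduce the $ER(n)$-Künneth statement to the base change formula of Theorem \ref{main2} combined with the already-known $E(n)$-Künneth isomorphism. Concretely, once we know that $(Y_a \wedge Y_b, Z_a \wedge Z_b)$ is itself a Landweber flat real pair, Theorem \ref{main2} computes all three $ER(n)$-cohomologies as base changes of their $E(n)$-analogues, and the desired isomorphism becomes a formal consequence of the monoidality of extension of scalars. Thus the work divides into verifying Definition \ref{realpair} for the smash pair, and a bookkeeping computation afterward.

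For the structure map take $g := j \circ (g_a \wedge g_b)$, where $g_c : Z_c \to Y_c^{\Z/2}$ are the given maps and $j : Y_a^{\Z/2} \wedge Y_b^{\Z/2} \hookrightarrow (Y_a \wedge Y_b)^{\Z/2}$ is the canonical inclusion of fixed points; this gives $i \circ g = (i_a \circ g_a) \wedge (i_b \circ g_b)$, so everything factors through external products. Four points must be checked. First, the weak projective property of $Y_a \wedge Y_b$: I would set up the commuting square whose vertical maps are the forgetful maps $\rho$, whose bottom arrow is the nonequivariant external product $E(n)^{2\ast}(Y_a) \,\widehat{\otimes}\, E(n)^{2\ast}(Y_b) \to E(n)^{2\ast}(Y_a \wedge Y_b)$, and whose top arrow is its equivariant, diagonal analogue. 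The bottom arrow is an isomorphism since each $E(n)^{2\ast}(Y_c)$ is a completed free $E(n)^*$-module (a power series algebra, by the projective property), and the left arrow is an isomorphism because each factor has the weak projective property; a diagram chase then reduces the weak projective property of $Y_a \wedge Y_b$ to the top (equivariant) arrow being an isomorphism. Second, condition (i) holds because $(i \circ g)^*$ is the completed tensor product of the two factorwise surjections, and a completed tensor product of surjections is a surjection. Third, condition (ii): since $\psi_Y$ and the hat construction are multiplicative and compatible with external products (Remark \ref{remark:mult}), the composite $\chi_{Z_a \wedge Z_b}$ sends an external product $z_a \times z_b$ to the external product $\chi_{Z_a}(z_a)\cdot\chi_{Z_b}(z_b)$ in $ER(n)$-cohomology; each factor already factors through $E(n)^{2\ast}(Z_c)$, so the whole map factors through $E(n)^{2\ast}(Z_a \wedge Z_b)$, with the factorization $\varphi_{Z_a \wedge Z_b}$ assembled from $\varphi_{Z_a}$ and $\varphi_{Z_b}$ by external multiplication. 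Fourth, the $2$-completed $BP$-cohomology of $Z_a \wedge Z_b$ is Landweber flat because it is a completed tensor product of the Landweber flat $2$-completed $BP$-cohomologies of $Z_a$ and $Z_b$ (via the $BP$-Künneth isomorphism for Landweber flat spaces), and completed tensor products of Landweber flat modules remain Landweber flat.

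With the smash pair now a Landweber flat real pair, Theorem \ref{main2} gives, in reduced form, isomorphisms $\tilde{ER}(n)^*(Z_c) \cong ER(n)^* \,\widehat{\otimes}_{\hat{E}(n)^*}\, \hat{E}(n)^*(Z_c)$ for $c \in \{a, b\}$ and for $Z_a \wedge Z_b$. The hat isomorphisms identify $\hat{E}(n)^*$-modules with $E(n)$-cohomology, so the (reduced) $E(n)$-Künneth isomorphism transports to $\hat{E}(n)^*(Z_a \wedge Z_b) \cong \hat{E}(n)^*(Z_a) \,\widehat{\otimes}_{\hat{E}(n)^*}\, \hat{E}(n)^*(Z_b)$. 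Substituting this and using the monoidality of base change, namely $ER(n)^* \,\widehat{\otimes}_{\hat{E}(n)^*}\, (M \,\widehat{\otimes}_{\hat{E}(n)^*}\, N) \cong (ER(n)^* \,\widehat{\otimes}_{\hat{E}(n)^*}\, M) \,\widehat{\otimes}_{ER(n)^*}\, (ER(n)^* \,\widehat{\otimes}_{\hat{E}(n)^*}\, N)$, collapses the result to exactly $\tilde{ER}(n)^*(Z_a) \,\widehat{\otimes}\, \tilde{ER}(n)^*(Z_b)$, as claimed.

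I expect the main obstacle to be the first check above, the weak projective property of $Y_a \wedge Y_b$, equivalently the equivariant Künneth isomorphism for diagonal classes (the top arrow of the square). Since $\E(n)^{\ast(1+\alpha)}(-)$ is only the diagonal piece of an equivariant theory rather than a genuine cohomology theory, and since $Y_a \wedge Y_b$ is generally not an $H$-space so that Theorem \ref{main1} does not apply, this isomorphism cannot be quoted directly. I would instead establish it by pulling back along the projective approximations $f_c : X_c \to Y_c$ (whose smash $X_a \wedge X_b$ is again projective), where the equivariant diagonal cohomology is computed explicitly and freely so that the Künneth isomorphism is visible, and then propagate it to $Y_a \wedge Y_b$ using that the images of $H_* f_c$ generate. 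A secondary technical point is the careful handling of completions throughout: one must ensure the relevant $\lim^1$-terms vanish or are absorbed by $2$-completion, per the convention adopted after Theorem \ref{main2}, so that the completed tensor products behave as stated and no phantom maps interfere.
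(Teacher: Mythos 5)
Your overall strategy coincides with the paper's: both proofs first verify that $(Y_a \wedge Y_b, Z_a \wedge Z_b)$ satisfies Definition \ref{realpair} with the same structure map $g$, and then deduce the $ER(n)$-K\"unneth isomorphism from Theorem \ref{main2} plus the $E(n)$-K\"unneth isomorphism. The differences are in how the hypotheses are checked. Where you argue by external products and completed tensor formalism (``a completed tensor product of surjections is a surjection,'' ``completed tensor products of Landweber flat modules remain Landweber flat''), the paper instead runs everything through Morava $K$-theory: it gets Landweber flatness of $BP^*(Z_a \wedge Z_b)$ from evenness of $K(m)_*(Z_a \wedge Z_b)$ via \cite[Lemma 5.5]{RWY}, and it verifies conditions (i) and (ii) by smashing the cofiber sequences $Z_c \to Y_c \to C(i\circ g_c)$ one factor at a time and transferring short exactness from $K(m)^*(-)$ to $E(n)^*(-)$ using \cite[Lemma 7.1]{RWY}; condition (ii) then follows from naturality and the null-homotopy of $\psi_{Y_a\wedge Y_b}\circ(h_a\wedge 1)$ rather than from multiplicativity on external products. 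The paper's route is preferable because it never has to justify that $\widehat{\otimes}$ preserves surjectivity or Landweber flatness, facts which are true here but only because the modules involved are pro-finitely-generated and evenly graded --- exactly the input that the Morava $K$-theory argument packages for free.

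Two of your steps deserve flags. First, the point you correctly identify as the main obstacle --- the weak projective property of $Y_a \wedge Y_b$ --- is handled in the paper by Remark \ref{rmk:smash}: although Theorem \ref{main1} as stated needs an $H$-space, its \emph{proof} does not once one has the splitting $E\Z/2_+ \wedge Y_c \wedge \MU \simeq \bigvee \Sigma^{\beta_i(1+\alpha)} E\Z/2_+\wedge\MU$, since smashing two such free $\MU$-modules on diagonal generators yields another one, and mapping out gives freeness of $\E(n)^\star(Y_a\wedge Y_b)$ on diagonal classes. Your proposed workaround (pulling back along $X_a \wedge X_b \to Y_a \wedge Y_b$ and ``propagating'') is vaguer and, as stated, only gives control in one direction in cohomology; I would replace it by the free-module argument above. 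Second, your condition (ii) argument needs the observation that $\tilde{E}(n)^{2\ast}(Y_a\wedge Y_b)$ is topologically generated by external products (which holds by the collapse of the Atiyah--Hirzebruch spectral sequence for the torsion-free spaces $Y_c$), together with continuity of $\chi_{Z_a\wedge Z_b}$; the paper's naturality argument sidesteps this. Neither issue is fatal, but both require the extra justification indicated.
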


\medskip
\noindent
In this article, we will show that several familiar spaces belong to Landweber flat real pairs. This, together with Theorem \ref{main1}, will give the content of Theorem \ref{main3} above. As examples, we have that, for arbitrary natural numbers $m$ and $q$, the spaces $Z = \Ek(\Z, 2m+1)$, $\Ek(\Z/2^q, 2m)$ and $\EMf$ are parts of Landweber flat real pairs with respect to $E(n)$ for arbitrary $n$. In addition, the connective covers $\BOg, \BSOg, \BSg, \widetilde{\BSg},$ and $\BOg\langle 8 \rangle$ also belong to Landweber flat real pairs with respect to $E(n)$, with the last case only holding for $n \leq 2$. Here the space $\widetilde{\BSg}$ denotes the fiber of the map $p_1: \BSg \longrightarrow \Ek(\Z, 4)$ and it belongs to the fibrations
$$\Ek(\Z, 3) \longrightarrow \widetilde{\BSg} \longrightarrow \BSg \text{\hspace{5pt} and \hspace{5pt}} \BOg\langle 8 \rangle \longrightarrow \widetilde{\BSg} \longrightarrow \Ek(\Z/2, 3).$$
\medskip

\section{Background: The Bockstein spectral sequence}

\medskip
\noindent
The ideas used in this article were implicit in the paper \cite{KW2} where the first and third authors computed $ER(n)^*(\BOg(q))$. The main tool that was exploited was the internal structure of a spectral sequence known as the Bockstein spectral sequence. Let us briefly recall the construction of this spectral sequence: 

\noindent
In \cite[Theorem 1.6]{Nitufib}, the first and third authors constructed a fibration of spectra:
\[
\Sigma^\lambda ER(n) \lra ER(n) \lra E(n)
\]
with the first map given by multiplication with the $(2^{n+1}-1)$-nilpotent class $x$ described above. Consequently this fibration leads to a convergent 
spectral sequence which we call the Bockstein spectral sequence. The main properties of this spectral sequence are described explicitly by the following theorem. For further properties of this spectral sequence, see \cite[Theorem 2.1]{KW2}.

\medskip
\begin{thm} \cite{KW2}
\label{bss} 
For $X$ a spectrum,
the above fibration yields a first and fourth quadrant 
spectral sequence of $ER(n)^\ast$-modules, $\mbox{E}_r^{i, j}(X) 
\Rightarrow ER(n)^{j-i}(X)$. The differential $d_r$ has 
bi-degree $(r, r+1)$ for $r \geq 1$.
The $\mbox{E}_1$-term is given by: $\mbox{E}_1^{i,j}(X) = 
E(n)^{i\lambda +j-i}(X)$, with 
\[ d_1(z) = v_n^{-(2^n-1)}(1-c)(z), \,  
\text{ where }  \, 
c(v_i)= -v_i.
\]
The differential $d_r$ 
increases cohomological degree by $1+r\lambda$
between the appropriate 
sub-quotients of $E(n)^\ast(X)$. 

\end{thm}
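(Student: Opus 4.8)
The plan is to realize the asserted spectral sequence as the one associated to the $x$-adic tower built from the fibration recalled just above, and then to identify its first differential by an explicit computation. First I would iterate
\[ \Sigma^\lambda ER(n) \xrightarrow{\,x\,} ER(n) \xrightarrow{\,\pi\,} E(n) \]
into the tower
\[ \cdots \lra \Sigma^{2\lambda}ER(n) \xrightarrow{\,x\,} \Sigma^{\lambda}ER(n) \xrightarrow{\,x\,} ER(n), \]
whose successive cofibers are suspensions of $E(n)$. Applying $[X,-]$ produces an unrolled exact couple, and hence a spectral sequence of $ER(n)^\ast$-modules; the associated long exact sequence of the fibration is
\[ \cdots \lra ER(n)^{s+\lambda}(X) \xrightarrow{\,\cdot x\,} ER(n)^{s}(X) \xrightarrow{\,\pi\,} E(n)^{s}(X) \xrightarrow{\,\partial\,} ER(n)^{s+\lambda+1}(X) \lra \cdots. \]
A class in filtration $i$ contributing to $ER(n)^{j-i}(X)$ arises from the level-$i$ cofiber $\Sigma^{i\lambda}E(n)$, hence sits in $E(n)$-cohomological degree $(j-i)+i\lambda$, which gives exactly $\mbox{E}_1^{i,j}(X)=E(n)^{i\lambda+j-i}(X)$ abutting to $ER(n)^{j-i}(X)$. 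Because $x$ is nilpotent ($x^{2^{n+1}-1}=0$), the tower is eventually constant, so the filtration on each $ER(n)^{t}(X)$ is finite and the spectral sequence converges strongly with no $\lim^1$ obstruction.

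The grading claims for the higher differentials are then formal. The derived exact couple shifts the $i$-filtration by $r$ at the $r$-th stage, so $d_r$ has bidegree $(r,r+1)$; reading this through the $\mbox{E}_1$-identification, $d_r$ carries $E(n)^{i\lambda+j-i}(X)$ to $E(n)^{(i+r)\lambda+(j+r+1)-(i+r)}(X)=E(n)^{i\lambda+j-i+r\lambda+1}(X)$, so it raises $E(n)$-cohomological degree by $1+r\lambda$ between the appropriate subquotients, as claimed.

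The real content, and the step I expect to be the main obstacle, is the identification of $d_1$ with $v_n^{-(2^n-1)}(1-c)$. Under the $\mbox{E}_1$-identification, $d_1$ is realized by the self-map $\Sigma^{\lambda+1}\pi\circ\partial\colon E(n)\lra \Sigma^{\lambda+1}E(n)$, a stable $E(n)$-operation of degree $\lambda+1$, and the task is to identify this map with $v_n^{-(2^n-1)}(1-c)$. Here I would invoke the homotopy fixed point description $ER(n)=\E(n)^{h\Z/2}$: the map $\pi$ is the inclusion of homotopy fixed points into the underlying theory, and $E(n)^\ast$ carries the residual complex-conjugation action $c$ with $c(v_i)=-v_i$. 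The first-stage obstruction in this fixed-point picture is precisely the norm-type operation $1-c$, whose kernel is the invariants, so $\partial$ composed back with $\pi$ reproduces $1-c$ up to a grading-shift unit. That unit is $v_n^{-(2^n-1)}$, recording the passage between diagonal and integral degrees built into $x$ and $\lambda$; concretely it is the underlying nonequivariant value of the invertible shifting class $y(n)\in\pi_{\lambda+\alpha}\E(n)$, whose underlying class is $v_n^{2^n-1}$. A degree check is consistent: $v_n^{-(2^n-1)}$ has $E(n)$-cohomological degree $2(2^n-1)^2=\lambda+1$. Making the identification of $\partial$ with $1-c$ fully rigorous uses the explicit construction of the fibration in \cite{Nitufib} rather than any formal property of the exact couple, which is why I regard it as the crux; once it is in hand, the higher differentials and the $ER(n)^\ast$-module structure follow immediately from the derived exact couple.
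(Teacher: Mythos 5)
Note first that the paper itself offers no proof of this statement: Theorem \ref{bss} is imported verbatim from \cite{KW2}, so the comparison here is with the construction in \cite{KW2} and \cite{Nitufib} rather than with an argument in this document. Against that benchmark, your proposal follows essentially the same route: the spectral sequence in the cited papers is exactly the one obtained from the (truncated) tower of $x$-multiplications with cofibers $\Sigma^{i\lambda}E(n)$, and your bookkeeping is correct -- the identification $\mbox{E}_1^{i,j}(X)=E(n)^{i\lambda+j-i}(X)$, the bidegree $(r,r+1)$ of $d_r$ translating into a degree shift of $1+r\lambda$, and the degree check $|v_n^{-(2^n-1)}|=2(2^n-1)^2=\lambda+1$ all come out right. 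Convergence is also handled correctly in substance, though ``the tower is eventually constant'' should read that the composites are eventually null (since $x^{2^{n+1}-1}=0$), so the filtration on $ER(n)^t(X)$ is finite; this is the truncated version of the tower, which is the one converging to $ER(n)^*(X)$ rather than to $0$ (see the remark following the theorem in the paper).

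The one place where your argument falls short of a proof is exactly the place you flag: the identification $d_1=v_n^{-(2^n-1)}(1-c)$. The appeal to ``the norm-type operation $1-c$ whose kernel is the invariants'' is a heuristic about homotopy fixed points, not a computation of the composite $\pi\circ\partial$; the homotopy fixed point spectral sequence and this Bockstein spectral sequence are not literally the same object, so one cannot simply transport the HFPSS $d_1$. In \cite{Nitufib} and \cite{NituP} the identification is made geometrically: the fibration arises from smashing $\E(n)$ with the equivariant cofibration $\Z/2_+ \lra S^0 \lra S^\alpha$ and using the invertible class $y(n)\in\pi_{\lambda+\alpha}\E(n)$ to replace $S^\alpha\wedge\E(n)$ by $\Sigma^{-\lambda}\E(n)$; the two cells of $\Z/2_+$ produce the two summands $1$ and $c$ of the attaching map, and $y(n)$ (with underlying class $v_n^{2^n-1}$) accounts for the unit $v_n^{-(2^n-1)}$ in the formula. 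Your sketch points at the right ingredients but does not carry out this step, so as written the crux of the theorem is asserted rather than proved; since you explicitly defer it to the construction in \cite{Nitufib}, the proposal is best read as a correct reduction of the theorem to that identification.
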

\begin{remark}\label{E1}
When $X$ is a space,
notice that there is a canonical 
class in $\mbox{E}_1^{1,-\lambda +1}(X)$ corresponding to the unit element under the identification 
$\mbox{E}_1^{1,-\lambda+1}(X) = E(n)^0(X)$. 
This class represents the element $x \in ER(n)^{-\lambda}$, and we call it by the same name.
It is a permanent cycle and we may use this 
class to simplify the notation. 
$$
\mbox{E}_1^{\ast, \ast}(X) = \mbox{E}_1^{0,\ast}(X)[x] = E(n)^\ast(X)[x], 
\quad |x|=(1,-\lambda +1). 
$$
$$
d_1(z) = x \, v_n^{-(2^n-1)}(1-c)(z), \,  
\text{ where }  \, 
v_n \in \mbox{E}_1^{0,-2(2^{n}-1)}.
$$

\end{remark}

\begin{remark}Depending on whether one truncates the multiplication by $x$ tower, there are two spectral sequences that can arise from the above. One converges to $ER(n)^*(X)$ (as in \cite{KW2, Lor16}), the other to $0$ (as in \cite{NituP, NituP2, KLW16a}). In the latter case, one must go back to reconstruct the answer from the differentials. Both have their advantages and ultimately contain equivalent information, but it is the truncated BSS converging to $ER(n)^*(X)$ that we use in this paper.
\end{remark}

Let us now recall the structure on this spectral sequence as developed in \cite{KW2}. The Bockstein spectral sequence is a spectral sequence of modules over
$ER(n)^*$.  Recall the map $\psi : E(n)^* \longrightarrow ER(n)^*$ above defined by $\psi(v_k) = \hat{v}_k$ for $k \le n$, with $\hat{E}(n)^*$ being defined as the image of $\psi$. Recall also that $\hat{E}(n)^*$ is abstractly isomorphic to $E(n)^*$ via the map $\psi$ that scales degrees by $(1-\lambda)/2$. It was shown in \cite{KW2} that the entire Bockstein spectral sequence for a point belongs to the category of $E(n)_*E(n)$-comodules which are finitely presented as $E(n)^*$-modules, which implies the following theorem:

\medskip
\begin{thm} \cite[Theorem 4.3]{KW2}
\label{ss}
Suppose $M$ is Landweber flat $\hat{E}(n)^\ast$-module, 
and let $(\mbox{E}_\ast,d_\ast)$ denote the 
Bockstein spectral sequence for $X = pt$. 
Then $(M \otimes_{\hat{E}(n)^*} \mbox{E}_*,\id_M 
\otimes_{\hat{E}(n)^*} d_\ast)$ is a spectral sequence 
of $ER(n)^\ast$-modules that 
converges to $M \otimes_{\hat{E}(n)^*} ER(n)^*$.
\end{thm}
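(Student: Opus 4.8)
The plan is to reduce the statement to a single formal principle: an exact functor commutes with the formation of a spectral sequence and with the reconstruction of its abutment. The one nontrivial input I would invoke is the fact recalled just above, that the entire Bockstein spectral sequence for a point lives in the abelian category $\mathcal{C}$ of $E(n)_*E(n)$-comodules that are finitely presented as $E(n)^*$-modules (identifying $\hat{E}(n)^* \cong E(n)^*$ via $\psi$). Under this identification, the hypothesis that $M$ is Landweber flat says precisely that the functor $M \otimes_{\hat{E}(n)^*}(-)$ is \emph{exact} on $\mathcal{C}$; this is Landweber's exact functor theorem in this setting. So the first step is simply to record that the pages $\mbox{E}_r$, the subobjects $\ker d_r$ and $\im d_r$, the abutment $ER(n)^*$ together with its filtration, and all the structure maps are objects and morphisms of $\mathcal{C}$, so that $\id_M \otimes(-)$ may be applied throughout and remains $ER(n)^*$-linear.

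I would organize the core of the argument around the exact couple underlying the spectral sequence, obtained by applying $[pt,-]$ to the tower $\cdots \to \Sigma^{2\lambda}ER(n) \to \Sigma^\lambda ER(n) \to ER(n)$ built from multiplication by $x$, together with the cofiber sequences identifying the cokernels with $E(n)$. This gives an exact couple with $D = \bigoplus_k ER(n)^*$ (shifted copies), $E = E(n)^* = \mbox{E}_1$, and all maps in $\mathcal{C}$. Applying the exact functor $M \otimes_{\hat{E}(n)^*}(-)$ preserves exactness, hence produces a new exact couple whose derived spectral sequence is precisely $(M \otimes \mbox{E}_\ast, \id_M \otimes d_\ast)$. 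Equivalently, one checks page by page that exactness commutes with homology: applying $M\otimes(-)$ to the short exact sequences
\[ 0 \to \ker d_r \to \mbox{E}_r \to \im d_r \to 0 \quad\text{and}\quad 0 \to \im d_r \to \ker d_r \to \mbox{E}_{r+1} \to 0 \]
in $\mathcal{C}$ yields $H(M \otimes \mbox{E}_r, \id_M \otimes d_r) \cong M \otimes \mbox{E}_{r+1}$, which is exactly the defining relation for a spectral sequence.

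The step I expect to require the most care is convergence to the correct abutment, namely that $M \otimes_{\hat{E}(n)^*} ER(n)^*$ really is what the tensored spectral sequence computes. The point is that $M \otimes(-)$ must be interchanged both with the passage to $\mbox{E}_\infty$ and with the reconstruction of $ER(n)^*$ from its associated graded. Here I would exploit the nilpotence $x^{2^{n+1}-1}=0$: the map of the exact couple given by multiplication by $x$ is nilpotent, so in each total degree the filtration on $ER(n)^*$ has bounded length and the spectral sequence converges strongly with no $\lim^1$ obstruction. Consequently the finite tower of short exact sequences $0 \to F^{s+1} \to F^s \to \gr^s \to 0$ defining the filtration lies in $\mathcal{C}$, and applying the exact functor $M\otimes(-)$ produces a filtration on $M \otimes ER(n)^*$ with associated graded $M \otimes \mbox{E}_\infty = (M \otimes \mbox{E})_\infty$, which identifies the abutment and completes the argument. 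The only thing to watch is that it is the finite-presentation hypothesis on the comodules, rather than ordinary flatness of $M$, that legitimizes using exactness on $\mathcal{C}$ at every stage; since $ER(n)^*$ is finitely generated over $\hat{E}(n)^*$ by the powers $x^0, \dots, x^{2^{n+1}-2}$, this hypothesis is indeed met.
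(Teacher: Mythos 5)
Your proposal is correct and follows exactly the route the paper indicates: the paper does not reprove this statement (it cites \cite[Theorem 4.3]{KW2}), but the justification it records --- that the entire Bockstein spectral sequence for a point lies in the category of $E(n)_*E(n)$-comodules finitely presented over $E(n)^*$, on which Landweber flatness of $M$ makes $M\otimes_{\hat{E}(n)^*}(-)$ exact --- is precisely the principle you elaborate, including the use of the nilpotence of $x$ for strong convergence. The only imprecision is your parenthetical claim that $ER(n)^*$ is generated over $\hat{E}(n)^*$ by the powers $x^0,\dots,x^{2^{n+1}-2}$ alone (one also needs residual powers of $v_n$ not in the image of $\psi$), but this does not affect the argument.
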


\section{Proof of the main theorems}

\noindent
We now proceed to prove the main theorems stated in the introduction. 

\begin{proof} (of Theorem \ref{main1}). Recall that by definition of the projective property, there is a projective space $X$ so that there is a $\Z/2$-equivariant pointed map $f: X \longrightarrow Y$, whose image generates the mod 2 homology. It follows that $H_*(Y, \Z/2)$ is even, and so $H_*(Y,\Z_{(2)})$ is free. The James construction produces a $\Z/2$-equivariant space $JX$ and, since $Y$ is an equivariant $H$-space, the map $f:X \longrightarrow Y$ extends to a map $\lift{f}: JX \longrightarrow Y$ given by sending an $m$-tuple to the product of its components in $Y$. The map $\lift{f}$ is evidently $\Z/2$-equivariant and induces a surjection on $H_*(-; \Z_{(2)})$. The Atiyah-Hirzebruch spectral sequence now shows that $MU_\ast (JX)$, and $MU_\ast (Y)$ are free $MU_\ast$-modules, and the map $MU_\ast(JX) \longrightarrow MU_\ast(Y)$ is split surjective. We may choose a basis $\{\gamma_1, \gamma_2, \dots\}$ for a subspace of $MU_\ast(JX)$ on which $MU_\ast(\lift{f})$ is an isomorphism by choosing it on the $E_2$-page of the spectral sequence. The next step is to pick equivariant representatives for these classes. 

The stable splitting $\Sigma^\infty JX \simeq \bigvee_{j \geq 1} (\Sigma^\infty X)^{\Smash j}$ is $\Z/2$-equivariant (see \cite[VII.5]{LMS}). Now consider the spectral sequence constructed using the cellular filtration of $JX$ induced by the canonical (equivariant) cellular filtration of the projective space $X$, and converging to $\MU_\star(JX)$ (i.e. the $\Z/2$-equivariant Atiyah-Hirzebruch spectral sequence of \cite{HK}). In the above stable splitting, each summand is projective, and since $\MU$ is Real-oriented, the spectral sequence collapses for projective spaces. This yields that $\MU_\star(JX)$ is a free $\MU_\star$-module on diagonal generators. Forgetting the $\Z/2$-action, these become the generators of $MU_\ast(JX)$, so we may choose a subset of the diagonal generators which lift the $\gamma_i \in MU_\ast(JX)$ above. We call these by the same name.

Since $\MU_\star(JX)$ is free, $\MU \wedge JX$ is a free $\MU$-module spectrum on a generating set of finite type $\MU \wedge JX = \bigvee_i \Sigma^{k_i(1+\alpha)} \MU$ where $\{ k_1, k_2, \ldots \}$ is a weakly increasing sequence of non-negative integers. The degrees of the (diagonal) $\gamma_i$ give a suitable subsequence $\{ \beta_1, \beta_2, \ldots \}$ in $\{ k_1, k_2, \ldots \}$ so that we get a $\Z/2$-equivariant map:
\[ \bigvee \Sigma^{\beta_i(1+\alpha)} \MU \longrightarrow Y \wedge \MU \]
which is a (non-equivariant) equivalence. On freeing up our spectra, it follows that we have an equivariant equivalence of $\MU$-module spectra
\[ \bigvee \Sigma^{\beta_i(1+\alpha)} E\Z/2_+ \wedge \MU \longrightarrow E\Z/2_+ \wedge Y \wedge \MU. \]
Mapping out of the above equivalence in the category of $\MU$-module spectra, and using the completeness of $\E$ we have that $\E^{\star}(Y)$ is a free module over $\E^{\star}$ on the classes $\gamma_i$:
\[ \E^\star \, (Y) =  \E^\star \langle \langle \gamma_1, \gamma_2, \ldots \rangle \rangle .\]
It follows that the forgetful map $\rho$ maps $\E^{\ast(1+\alpha)}(Y)$ isomorphically onto $E^{2\ast}(Y)$. 
 \end{proof}

\begin{remark}\label{free}
Though beyond the conclusion of Theorem \ref{main1} we do not use this fact here, we note that the above proof in fact computes the entire $RO(\Z/2)$-graded $\E$-cohomology of $Y$.
\end{remark}

\begin{remark}\label{rmk:smash}
Note that if $Y_a$ and $Y_b$ are spaces with the projective property, then $Y_a \wedge Y_b$ is not necessarily an $H$-space and thus may not have the projective property. However, the above proof still shows that $Y_a \wedge Y_b$ does have the weak projective property. This is used in the proof of Theorem \ref{Kunneth} below.
\end{remark}

\noindent
Let us proceed to the proof of Theorem \ref{main2}. 

\begin{proof} (of Theorem \ref{main2}). We will first prove the theorem for $n>1$, then discuss the changes necessary to prove it for $n=1$. Let $Z$ be a space whose 2-completed $BP$-cohomology is Landweber flat. Then, by \cite[Proposition 2.1]{W99}, $E(n)^*(Z)$ is Landweber flat with respect to $E(n)$ as well. Assume that $Z$ is endowed with a map $g : Z \longrightarrow Y^{\Z/2}$ for some space $Y$ with the weak projective property and so that $E(n)^*(Y)$ surjects onto $E(n)^*(Z)$. Finally, assume that the map:
\[ \chi_Z : E(n)^{2\ast}(Y) \longrightarrow \E(n)^{\ast(1-\lambda)}(Y) \longrightarrow ER(n)^{\ast(1-\lambda)}(Z), \]
factors through $E(n)^{2\ast}(Z)$ and let $\varphi_Z : E(n)^{2\ast}(Z) \longrightarrow ER(n)^{\ast(1-\lambda)}(Z)$ denote this factorization:

$$\xymatrix{E(n)^{2\ast}(Y) \ar@{->}[r]^-{\chi_Z} \ar@{->}[d]_-{(i \circ g)^*} & ER(n)^{\ast (1-\lambda)}(Z) \\
E(n)^{2 \ast}(Z) \ar@{->}[ur]_-{\varphi_Z} & }$$

 The composite of $\varphi_Z$ and the canonical map $\iota : ER(n)^{\ast(1-\lambda)}(Z) \longrightarrow E(n)^{\ast(1-\lambda)}(Z)$ is nothing other than the algebra homomorphism
\[ \iota \varphi_Z : E(n)^{2\ast}(Z) \longrightarrow E(n)^{\ast(1-\lambda)}(Z), \quad \quad z \longmapsto zv_n^{(2^n-1)|z|/2}. \]
which is injective since $v_n$ is invertible (and $1-\lambda \neq 0$). From this it follows that $\varphi_Z$ is injective. Now notice by definition that the sub algebra $\hat{E}(n)^*(Z) \subseteq ER(n)^*(Z)$ consists of permanent cycles. In addition, $\hat{E}(n)^*(Z)$ is Landweber flat. Hence, invoking Theorem \ref{ss}, we get a map of spectral sequences:
\[ \mbox{E}_r  \otimes_{\hat{E}(n)^*} \hat{E}(n)^*(Z) \longrightarrow \mbox{E}_r(Z), \]
representing the canonical map $ER(n)^* \otimes_{\hat{E}(n)^*} \hat{E}(n)^*(Z) \longrightarrow ER(n)^*(Z)$, 
and where $\mbox{E}_r$ denotes the Bockstein spectral sequence for a point, and $\mbox{E}_r(Z)$ denotes the Bockstein spectral sequence for the space $Z$. We claim that this map is an isomorphism at the $\mbox{E}_1$-stage, and thus at the $\mbox{E}_\infty$-stage. Indeed, using the description of the $\mbox{E}_1$-page in Remark \ref{E1} and the isomorphism $E(n)^*(Z) \cong E(n)^* \otimes_{\hat{E}(n)^*} \hat{E}(n)^*(Z)$, we have
\begin{align*}
E_1(Z) & \cong E(n)^*(Z)[x]\\
& \cong (E(n)^* \otimes_{\hat{E}(n)^*} \hat{E}(n)^*(Z))[x] \\
& \cong E(n)^*[x] \otimes_{\hat{E}(n)^*} \hat{E}(n)^*(Z)\\
& \cong \mbox{E}_1  \otimes_{\hat{E}(n)^*} \hat{E}(n)^*(Z)
\end{align*}
Thus, this map of spectral sequences represents an isomorphism at the $\mbox{E}_\infty$-stage, and hence must represent an isomorphism.

If $n=1$ (and hence $1-\lambda=0$), then the following changes are necessary. The domain of $\psi_Y$ is defined to be $E(1)^0(Y)$, and we assume that the map 
$$\chi_Z: E(1)^0(Y) \longrightarrow ER(1)^0(Z)$$
 factors through $E(1)^{0}(Z)$. We let $\varphi_Z: E(1)^0(Z) \longrightarrow ER(1)^0(Z)$ denote this factorization. The map 
 $$\iota \varphi_Z: E(1)^0(Z) \longrightarrow E(1)^0(Z)$$
  is then the identity map when $n=1$ and the subalgebra $\hat{E}(n)^*(Z)$ of $E(n)^*(Z)$ is replaced by $E(1)^0(Z)$ (and $\hat{E}(n)^*$ is replaced by $E(1)^0$). As $E(1)^0(Z)$ is Landweber flat, we obtain a map of spectral sequences 
  $$\mbox{E}_r \otimes_{E(1)^0} E(1)^0(Z) \longrightarrow \mbox{E}_r(Z).$$
   The same argument as above (in particular, the fact that $E(1)^*(Z) \cong E(1)^* \otimes_{E(1)^0} E(1)^0(Z)$) then gives the desired isomorphism.

\end{proof}
\begin{proof} (of Theorem \ref{Kunneth}). First, observe that the space $Y_a \wedge Y_b$ has the weak projective property by Remark \ref{rmk:smash}. Also, the Kunneth isomorphism in Morava K-theory implies that $Z_a \wedge Z_b$ has even Morava $K$-theory. It follows by \cite[Lemma 5.5]{RWY} that the $2$-completed $BP$ cohomology of $Z_a \wedge Z_b$ is Landweber flat. Note that by \cite[Proposition 2.1]{W99}, it follows that the $E(n)$-cohomology of $Z_a \wedge Z_b$ is Landweber flat with respect to $E(n)$ as well.

\smallskip
\noindent
Define the map $g$ by the composite
$$\xymatrix{Z_a \wedge Z_b \ar@{->}[r]^-{g_a \wedge g_b} & Y_a^{\Z/2} \wedge Y_b^{\Z/2} \ar@{->}[r]&  (Y_a \wedge Y_b)^{\Z/2}}$$
We will now prove that (i) and (ii) of Definition \ref{realpair} hold. Consider the cofiber sequences
$$\xymatrix{Z_a \ar@{->}[r]^-{i \circ g_a} & Y_a \ar@{->}[r]^-{h_a} &C(i \circ g_a)} \text{ and } \xymatrix{Z_b \ar@{->}[r]^-{i \circ g_b} & Y_b \ar@{->}[r]^-{h_b} &C(i \circ g_b)}$$
By our assumptions, both of these give rise to short exact sequences in $E(n)$-cohomology. It now follows from the fact that all spaces under consideration have evenly graded $E(n)$-cohomology that the above cofibrations give rise to short exact sequences of evenly graded modules in $K(n)$-cohomology. Smashing with the identity maps on $Y_b$ and $Z_a$, respectively, the following give rise to short exact sequences in $K(n)$-cohomology:
$$\xymatrix{Z_a \wedge Y_b \ar@{->}[r]^-{(i \circ g_a) \wedge 1} & Y_a \wedge Y_b \ar@{->}[r]^-{h_a\wedge 1} &C(i \circ g_a)} \wedge Y_b \quad \text{ and } \quad \xymatrix{Z_a \wedge Z_b \ar@{->}[r]^-{1 \wedge (i \circ g_b)} & Z_a \wedge Y_b \ar@{->}[r]^-{1 \wedge h_b} &Z_a \wedge C(i \circ g_b)}$$
Invoking \cite[Lemma 7.1]{RWY}, we see that these also induce short exact sequence in (reduced) $E(n)$-cohomology. This establishes the following natural isomorphisms:
\[ \tilde{E}(n)^*(Z_a \wedge Z_b) = \frac{{\tilde{E}(n)}^*(Y_a \wedge Y_b)}{\langle h_a^* + h_b^* \rangle} = \frac{{\tilde{E}(n)}^*(Y_a) \widehat{\otimes} \tilde{E}(n)^*(Y_b)}{\langle h_a^* + h_b^*\rangle}, \]
with the second isomorphism following from the Atiyah-Hirzeburch spectral sequence for $Y_a$ and $Y_b$ which collapses due to the fact that they have evenly graded torsion-free singular cohomology. By naturality, $\psi_{Y_a \wedge Y_b} \circ (h_a \wedge 1)$ is null-homotopic, so $\psi_{Y_a \wedge Y_b}$ factors through $\tilde{E}(n)^*(Z_a \wedge Y_b)$. Call this factorization $f$. Again by naturality, we have that $f \circ (1 \wedge h_b)$ is null-homotopic, so $f$ factors through $\tilde{E}(n)^*(Z_a \wedge Z_b)$. This factorization is the desired $\varphi_{Z_a \wedge Z_b}$. This establishes the validity of conditions (i) and (ii) of Definition \ref{realpair}. It remains to show the Kunneth isomorphism for $ER(n)$-cohomology. Notice that for $E(n)$-cohomology, the Kunneth isomorphism is a straightforward consequence of Theorem 1.8 and Proposition 1.9 in \cite{W99}. The Kunneth isomorphism for $ER(n)$ then follows since $ER(n)^*$ is finitely generated over $\widehat{E}(n)^*$ (with an obvious adjustment in the case when $n=1$).
\end{proof}

\section{Examples}\label{sec:examples}

\medskip
We now apply our main theorem to some examples. As we will see, the spaces $\EM$, $\EMF$ and $\EMf$ , and the connective covers $\BOg, \BSOg, \BSg, \widetilde{\BSg}$ are all part of Landweber flat pairs with respect to $E(n)$. In each of our examples, we use an auxiliary space $Y_1$ with the projective property together with an equivariant $H$-map $h: Y \longrightarrow Y_1$ to produce the desired factorization $\varphi_Z$. Specifically, we will produce sequences of equivariant $H$-maps (where $Z$ has trivial $\Z/2$-action)
$$\xymatrix{Z \ar@{->}[r]^{i \circ g} & Y \ar@{->}[r]^{h} & Y_1}$$
satisfying the following properties:

\medskip
\begin{conditions}
\leavevmode
\begin{enumerate}[(1)]
\item $Y$ and $Y_1$ are complete spaces with the projective property,
\item The composite $h \circ (i \circ g)$ is equivariantly null-homotopic,
\item the sequence induces a left exact sequence of bicommutative Hopf algebras in Morava $K$-theory, $K(m)_*(-)$, for all $m >0$. 
\end{enumerate}
\end{conditions}

\medskip
\noindent
These conditions will ensure that $(Z, Y)$ is a real pair as follows. Conditions (1) and (3) show that $Z$ has even Morava $K(m)$-homology for $m>0$ (even Morava $K$-theory, for short). It follows that $E(n)^*(Z)$ is Landweber flat. We claim that the sequence of spaces $Z \longrightarrow Y \longrightarrow Y_1$ induces a right exact sequence of completed, graded $E(n)^*$-algebras. We will prove this from the following lemma, an application of \cite{RWY}.

\medskip
\begin{lemma} \label{K(n)toE(n)}
Suppose that the sequence of spaces
$$\xymatrix{X_1 \ar@{->}[r]^-{a} & X_2 \ar@{->}[r]^-{b} & X_3}$$
induces a right exact sequence (of $K(m)^*$-modules) in $K(m)^*$-cohomology for all $m>0$, $X_2$ and $X_3$ have even Morava $K$-theory, and $b \circ a \simeq \ast$. Then it induces a right exact sequence (of $E(n)^*(-)$-modules) in $E(n)$-cohomology. The same is true if right exact is replaced by short exact. 
\end{lemma}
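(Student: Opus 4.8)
The plan is to prove the statement at the level of $BP$-cohomology and then transport it to $E(n)$-cohomology by Landweber-flat base change, with the decisive input being the Morava-$K$-theory detection results of \cite{RWY}. The guiding observation is that the cokernel of the middle map is controlled chromatically by its Morava $K$-theories, so it suffices to see the $K(m)^*$-statements assemble correctly.

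First I would note that $X_1$ automatically inherits even Morava $K$-theory: right exactness in $K(m)^*$ makes $b \circ a$-complementary map $K(m)^*(X_2) \to K(m)^*(X_1)$ a grading-preserving surjection, and since $K(m)^*(X_2)$ is concentrated in even degrees so is its quotient $K(m)^*(X_1)$. Thus all three spaces have even Morava $K$-theory for every $m > 0$, so by \cite[Lemma 5.5]{RWY} their $2$-completed $BP$-cohomologies are Landweber flat, and by \cite[Proposition 2.1]{W99} the same holds after passing to $E(n)$. This yields the base-change identifications $K(m)^*(X_i) \cong K(m)^* \otimes_{BP^*} BP^*(X_i)$ and $E(n)^*(X_i) \cong E(n)^* \widehat{\otimes}_{BP^*} BP^*(X_i)$, and, because all three $BP^*(X_i)$ are Landweber flat, base change along $E(n)^* \widehat{\otimes}_{BP^*}(-)$ is exact on short exact sequences built from them (with the $2$-completions handled as in \cite{RWY}).

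Next I would reduce the problem to $BP$. Writing $b^* : BP^*(X_3) \to BP^*(X_2)$ and $a^* : BP^*(X_2) \to BP^*(X_1)$, the relation $b \circ a \simeq \ast$ gives $a^* b^* = 0$, so $a^*$ descends to a map $\bar a : BP^*(X_2)/\im(b^*) \to BP^*(X_1)$. If I can show $BP^*(X_3) \to BP^*(X_2) \to BP^*(X_1) \to 0$ is right exact — equivalently that $\bar a$ is an isomorphism — then applying $E(n)^* \widehat{\otimes}_{BP^*}(-)$ and using right-exactness of the tensor product together with the base-change identification produces exactly the desired right-exact sequence of $E(n)^*$-modules; the short-exact version follows verbatim once $b^*$ is also injective, using the exactness of the (Landweber-flat) base change noted above. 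Since the tensor product is right exact, $K(m)^* \otimes_{BP^*} \bigl(BP^*(X_2)/\im(b^*)\bigr) = \mathrm{coker}(b^* \otimes K(m)^*)$, and the hypothesis of right exactness in $K(m)^*$, read through the base-change identification, says precisely that $\bar a$ becomes an isomorphism after $K(m)^* \otimes_{BP^*}(-)$ for every $m > 0$.

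The crux — and the step I expect to be the main obstacle — is to conclude that $\bar a$ is itself an isomorphism from the fact that it becomes one after every Morava $K$-theory base change. One cannot merely cite flatness here, since the cokernel $BP^*(X_2)/\im(b^*)$ is not a priori Landweber flat, so the argument must use the full strength of the chromatic, Nakayama-type detection of \cite{RWY}: a map between suitable $2$-complete, finite-type $BP^*$-modules arising from spaces with even Morava $K$-theory that induces isomorphisms on all Morava $K$-theories is itself an isomorphism. Concretely, I would invoke \cite[Lemma 7.1]{RWY}, applied to the null-composite geometric sequence $X_1 \to X_2 \to X_3$, which packages exactly this detection and delivers the (short) exactness over $BP^*$, and hence, after base change, over $E(n)^*$.
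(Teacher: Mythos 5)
You correctly isolate the crux --- deducing that $\bar a \colon BP^*(X_2)/\im(b^*) \to BP^*(X_1)$ is an isomorphism from the fact that it becomes one after base change to every $K(m)^*$ --- but you do not actually carry out that step, and the citation you offer for it does not do the job. \cite[Lemma 7.1]{RWY} is a statement about maps of \emph{spaces} with even Morava $K$-theory: injectivity (or surjectivity) detected in Morava $K$-theory transfers to $E(n)$-cohomology of those spaces. Your map $\bar a$ has as its source the abstract quotient module $BP^*(X_2)/\im(b^*)$, which is not a priori the cohomology of any space and not a priori Landweber flat (as you yourself note), so the lemma cannot be ``applied to the null-composite geometric sequence'' in the way you suggest; there is no map of spaces in sight whose induced map is $\bar a$. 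This is precisely the point where a genuine idea is needed, and it is the one your proposal is missing. (The surrounding reductions --- evenness of $X_1$, Landweber flatness, right-exactness of base change --- are fine but are not where the difficulty lies.)

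The paper supplies the missing idea by realizing the relevant kernels and cokernels geometrically. It forms the cofiber sequence $X_1 \xrightarrow{a} X_2 \xrightarrow{c} C(a)$, uses surjectivity of $a^*$ in $K(m)^*$ for all $m>0$ to conclude that $C(a)$ has even Morava $K$-theory and that $c^* \colon K(n)^*(C(a)) \to K(n)^*(X_2)$ is injective, and only then invokes \cite[Lemma 7.1]{RWY} --- now legitimately, since $c$ is a map of spaces --- to get injectivity of $c^*$ in $E(n)^*(-)$ and hence a short exact sequence in $E(n)$-cohomology. The relation $b \circ a \simeq \ast$ gives a factorization $p \colon C(a) \to X_3$ of $b$, and the same maneuver applied to the cofiber sequence $C(a) \to X_3 \to C(p)$ shows $p^*$ is surjective in $E(n)^*(-)$; splicing the two yields the desired (right or short) exact sequence. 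If you replace your purely algebraic detection step with this cofiber argument, your outline becomes essentially the paper's proof; as written, the key step is asserted rather than proved.
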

\begin{proof}
Consider the cofiber sequence $\xymatrix{X_1 \ar@{->}[r]^-{a} & X_2 \ar@{->}[r]^-{c} & C(a)}$. Note that since $a^*$ is surjective in $K(m)^*(-)$ for all $m>0$, the above sequence must be short exact in $K(m)^*(-)$, and so $C(a)$ has even Morava $K$-theory and $c^*$ must be injective in $K(n)^*(-)$. It follows from \cite[Lemma 7.1]{RWY} that $c^*$ is injective in $E(n)^*(-)$. Hence, the sequence $X_1 \longrightarrow X_2 \longrightarrow C(a)$ is short exact in $E(n)^*(-)$. 

Since $b \circ a \simeq \ast$, we have a factorization of $b$ through $p: C(a) \longrightarrow X_3$. We claim that $p$ is surjective in $E(n)^*(-)$. A diagram chase shows it is surjective in $K(m)^*(-)$ for all $m>0$. Now consider the cofiber sequence $C(a) \longrightarrow X_3 \longrightarrow C(p)$. It must induce a short exact sequence in $K(m)^*(-)$ and so $C(p)$ has even Morava $K$-theory. In particular, in $K(n)$-cohomology we have a short exact sequence
$$\xymatrix{0 \ar@{->}[r] & K(n)^*(C(p)) \ar@{->}[r] & K(n)^*(X_3) \ar@{->}[r]^-{p^*} & K(n)^*(C(a)) \ar@{->}[r] & 0}$$
Since the left hand map is injective in $K(n)^*(-)$ and $C(p)$ and $X_3$ have even Morava $K$-theory, applying \cite[Lemma 7.1]{RWY} again shows that the left hand map is injective in $E(n)^*(-)$. It follows that the sequence $C(a) \longrightarrow X_3 \longrightarrow C(p)$ is short exact in $E(n)^*(-)$ and in particular $p^*$ is surjective. Splicing $X_3$ into the sequence $X_1 \longrightarrow X_2 \longrightarrow C(a)$ along $p: C(a) \longrightarrow X_3$ gives us the right exact sequence in $E(n)^*(-)$ we desired. The extension of this result to the short exact situation follows immediately using \cite[Lemma 7.1]{RWY}. 
\end{proof}

We now follow the proof of \cite[Theorem 1.9]{RWY}. We construct the sequence
$$\xymatrixcolsep{5pc}\xymatrix{Z \ar@{->}[r]^-{i\circ g} & Y \ar@{->}[r]^-{F:=(1, h)} & Y_+ \wedge Y_1}$$
with $F \circ (i \circ g) \simeq \ast$. This must induce a right exact sequence (of $K(m)^*$-modules) in $\tilde{K}(m)^*(-)$ for all $m>0$ by condition (3). Since $Y$ and $Y_1$ have the projective property, the right two spaces have even Morava $K$-theory and so we may apply the above lemma to obtain a right exact sequence in $\tilde{E}(n)^*(-)$. We have a completed Kunneth isomorphism (from the collapse of the Atiyah-Hirzebruch spectral sequence)
$$\tilde{E}(n)^*( Y_+ \wedge Y_1)=E(n)^*(Y) \hat{\otimes} IE(n)^*(Y_1)$$
which shows  $Z \longrightarrow Y \longrightarrow Y_1$ induces a right exact sequence of completed, graded $E(n)^*$-algebras as desired.

We now have the following diagram:
$$\xymatrix{
E(n)^{2\ast}(Y_1)\ar@{->}[r]^-{\psi_{Y_1}}_-\cong \ar@{->}[d]^-{h^*} & \mathbb{E}(n)^{\ast(1-\lambda)}(Y_1) \ar@{->}[r]^-i \ar@{->}[d]^-{h^*} & ER(n)^{\ast(1-\lambda)}(Y_1^{\Z/2}) \ar@{->}[d]^-{h^*} \\
E(n)^{2\ast}(Y)\ar@{->}[r]^-{\psi_Y}_-\cong \ar@{->}[d]^-{(i \circ g)^*} & \mathbb{E}(n)^{\ast(1-\lambda)}(Y) \ar@{->}[r]^-i \ar@{->}[d]^-{(i \circ g)^*} & ER(n)^{\ast(1-\lambda)}(Y^{\Z/2}) \ar@{->}[d]^-{(i \circ g)^*} \\
E(n)^{2\ast}(Z)\ar@{->}[r]^-{\exists \varphi_Z} & \mathbb{E}(n)^{\ast(1-\lambda)}(Z) \ar@{=}[r]  & ER(n)^{\ast(1-\lambda)}(Z) \\
}$$
Exactness (as completed, graded algebras) of the left vertical column together with condition (2) now yield the desired factorization $\varphi_Z$. It follows that $(Y, Z)$ is a Landweber flat real pair and so the conclusion of Theorem \ref{main1} applies.

\begin{remark}\label{weaker}
As pointed out by the referee, it appears to the authors that condition (3) above may be weakened by requiring the left exact sequence only for $m=n$ (rather than all $m>0$). \cite[Lemma 5.1]{RWY} implies Landweber flatness of $E(n)^*(Z)$.  Our Lemma \ref{K(n)toE(n)} uses \cite[Lemma 7.1]{RWY}, which as stated requires even Morava $K$-theory (for all $m>0$). However, it seems that even $K(n)^*(-)$ (only for the $n$ of interest) may suffice. Since the stronger assumption of even Morava $K$-theory holds for all of our examples below, we do not pursue this point further here.
\end{remark}

\subsection{Eilenberg-MacLane spaces}
For each of the spaces $Z=\EM$, $\EMF$, and $\EMf$, the spaces $Y$ and $Y_1$ will both be $\BPRm{k}{l}$ for suitable choices of integers $k$ and $l$. This proof is essentially a consequence of \cite{RWY}. We shall prove the result in detail for $\EM$ and indicate the cosmetic changes required to prove the other cases. 

\medskip
\noindent
Define the map of equivariant infinite loop spaces:
\[ \delta_{k} : \BPRm{k-1}{(2^{k}-1)(1+\alpha)+\alpha} \longrightarrow \BPR{k} \]
given by the fiber of multiplication by $v_k$:
\[ v_k : \BPR{k} \longrightarrow \BPRm{k}{2^k(1+\alpha)}. \]
Now consider the composite map $\delta$ defined as: 
\[ \delta := \delta_{2m-1} \circ \delta_{2m-2} \circ \cdots \circ \delta_1 : \BPRm{0}{2m\alpha+1} \longrightarrow \BPRm{2m-1}{(2^{2m}-1)(1+\alpha)}. \]
The space $\BPRm{0}{2m\alpha+1}$ is an equivariant model for $\EM$ with trivial $\Z/2$-action up to homotopy. The homotopy fixed point spectral sequence computing $\pi_*(\BPRm{0}{2m\alpha+1}^{h\Z/2})$, which collapses immediately since it is concentrated on a single line, thus yields that the top degree homotopy group is $\Z_{(2)}$ in degree $2m+1$. The top stage of the Postnikov tower for $\BPRm{0}{2m\alpha+1}^{h\Z/2}$ now gives a map $\EM \longrightarrow \BPRm{0}{2m\alpha+1}^{h\Z/2}$ which splits the canonical map $\BPRm{0}{2m\alpha+1}^{h\Z/2} \longrightarrow \BPRm{0}{2m\alpha+1}=\Ek(\Z, 2m+1)$ uniquely up to homotopy.

We now choose our triple $(Z, Y, Y_1)$ to be 
$$\left(\EM, \BPRm{2m-1}{(2^{2m}-1)(1+\alpha)}, \BPRm{2m-1}{2^{2m-1}(1+\alpha)}\right).$$
Note that $Y$ and $Y_1$ are complete, and thus the fixed points and homotopy fixed points are equivalent. We define the map $g: Z \longrightarrow Y^{\Z/2}$ to be the splitting constructed above followed by $\delta$ (followed by the equivalence $Y^{h\Z/2} \simeq Y^{\Z/2}$):
$$\xymatrix{\EM \ar@{->}[r] & \BPRm{0}{2m\alpha+1}^{h\Z/2} \ar@{->}[r]^-{\delta} & \BPRm{2m-1}{(2^{2m}-1)(1+\alpha)}^{h\Z/2}}$$
We choose $h: Y \longrightarrow Y_1$ to be the unstable map induced by $v_{2m-1}$-multiplication on the level of spectra. Since $v_{2m-1} \circ \delta$ is equivariantly trivial, condition (2) follows. Condition (1) follows from \cite[Appendix]{Kitch-Wil-split}. Finally, condition (3) is Proposition 1.16 in \cite{RWY}. This establishes the result for $\EM$.

\medskip
\noindent
The same argument works for $\Ek(\Z/2^q, m)$ when $q \geq 1$ and $m$ is even. Additionally, when $q=1$, the argument goes through for all $m$. We simply extend $\delta$ by the Bockstein $\delta_0: (\underline{\B\D} \langle 0 \rangle/2^q)_{m\alpha}  \longrightarrow \underline{\B\D} \langle 0 \rangle_{m\alpha +1}$:
\[ \delta_{m-1} \circ \delta_{m-2} \circ \cdots \circ \delta_1 \circ \delta_0 : (\underline{\B\D} \langle 0 \rangle/2^q)_{m\alpha} \longrightarrow \underline{\B\D}\langle m-1 \rangle_{(2^{m}-1)(1+\alpha)}. \]
We chose $Y=\BPRm{m-1}{(2^m-1)(1+\alpha)}$ and $Y_1=\BPRm{m-1}{2^{m-1}(1+\alpha)}$ and define $g$ and $h$ as before. This observation establishes all other cases proving Theorem \ref{main3} for the first family of examples.

\begin{remark}This argument fails for $\Ek(\Z/2^q, 2m+1)$ for $q>1$ because $(\underline{\B\D} \langle 0 \rangle/2^q{\, })_{(2m+1)\alpha}$ has nontrivial $\Z/2$-action and so its homotopy fixed points do not have $\Ek(\Z/2^q, 2m+1)$ as a retract. The remaining Eilenberg-MacLane spaces are not so straightforward. In fact, the results of \cite{Lor16} and \cite{KLW16a} show that the computations of $ER(n)^*(\mathbb{C}P^\infty)$ and $ER(2)^*\Ek(\Z/2^q, 1)$ both involve actually making sense of Bockstein spectral sequence differentials and do not follow from $E(n)$-cohomology by tensoring up. \end{remark}

\subsection{Connective covers of $\BOg$}
The spaces $\BUg$, $\BSUg$, and $\BUg\langle 6 \rangle$ are equivariantly equivalent to the spaces $\BPRm{1}{1+\alpha}$, $\BPRm{1}{2(1+\alpha)}$ and $\BPRm{1}{3(1+\alpha)}$, respectively. Again citing \cite{Kitch-Wil-split}, these spaces are complete and satisfy the projective property. They will be our choice of $Y$. Their fixed points admit canonical $H$-maps from the spaces  $Z=\BOg$, $\BSOg$, and $\widetilde{\BSg}$ respectively. Thus one obtains a map $g: Z \longrightarrow Y^{\Z/2}$ for each of them. In each case, we choose $Y_1=Y$ and take the self-map given by $1-c$, where $c$ denotes the generator of $\Z/2$. Our three sequences $Z\longrightarrow Y \longrightarrow Y_1$ are given as follows:
\begin{alignat*}{3}
\BOg \xymatrix{\ar@{->}[r]^-{i \circ g}&}  &\BPRm{1}{1+\alpha} &\xymatrix{\ar@{->}[r]^-{1-c}&}  &\BPRm{1}{1+\alpha}\\
\BSOg \xymatrix{\ar@{->}[r]^-{i \circ g}&}  &\BPRm{1}{2(1+\alpha)} &\xymatrix{\ar@{->}[r]^-{1-c}&} &\BPRm{1}{2(1+\alpha)}\\
\widetilde{\BSg}  \xymatrix{\ar@{->}[r]^-{i \circ g}&}  &\BPRm{1}{3(1+\alpha)} &\xymatrix{\ar@{->}[r]^-{1-c}&} &\BPRm{1}{3(1+\alpha)}
\end{alignat*}
Since $i \circ (1-c)$ is equivariantly null-homotopic, the above sequences all satisfy condition (2). For the first two sequences, condition (3) is \cite[Theorem 6.4]{KLW}. 

To prove condition (3) for $\widetilde{\BSg}$, we must unravel some results of \cite{KLW}. Forgetting the $\Z/2$-action, the third sequence above is, in the notation of \cite{KLW}, given by a sequence $ \widetilde{\BSg} \longrightarrow \underline{bu}_6 \longrightarrow \underline{bu}_6$ which may be constructed by splicing together the fibrations: 
$$ \widetilde{\BSg} \longrightarrow \underline{bu}_6 \longrightarrow \underline{\BSOg}_2, \hspace{20pt}\text{and} \hspace{20pt} \underline{\BSOg}_2 \longrightarrow \underline{bu}_6 \longrightarrow \underline{\BOg}_4.$$
The second fibration above gives rise to a short exact sequence of evenly graded bicommutative Hopf algebras in $K(m)$-homology for $m >0$ by \cite[Theorem 2.3.6]{KLW}. It remains to show that the first fibration also gives rise to such a short exact sequence. To do this we consider the following commutative diagram with all rows and columns being fibrations of infinite loop spaces: 

$$\xymatrix{
\Ek(\Z, 3) \ar@{=}[r] \ar@{->}[d] & \Ek(\Z, 3) \ar@{->}[r] \ar@{->}[d] & \ast \ar@{->}[d] \\
\widetilde{\BSg} \ar@{->}[r] \ar@{->}[d] & \underline{bu}_6 \ar@{->}[r] \ar@{->}[d] & \underline{\BSOg}_2\ar@{=}[d] \\
\BSg \ar@{->}[r] & \underline{bu}_4 \ar@{->}[r]  &  \underline{\BSOg}_2. \\
}$$

By \cite[Theorem 2.3.6]{KLW}, the Morava $K(m)$-homology of the bottom row and the middle column give rise to short exact sequences of evenly graded bicommutative Hopf algebras for $m>0$. It follows that the Morava $K(m)$-homology of $\Ek(\Z, 3)$ injects into the Morava $K(m)$-homology of $\widetilde{\BSg}$. Consequently, the left vertical fibration also gives rise to a short exact sequence of evenly graded bicommutative Hopf algebras (see \cite[Theorem 4.2 (ii)]{KLW}). Furthermore, we also notice that the Morava $K(m)$-homology of $\widetilde{\BSg}$ injects into that of $\underline{bu}_6$. As before, it follows that the Morava $K(m)$-homology of the middle row must also give rise to a short exact sequence of evenly graded bicommutative Hopf algebras for $m>0$. 

\noindent
From the above, we conclude that the spliced sequence, $\widetilde{\BSg} \longrightarrow \underline{bu}_6 \longrightarrow \underline{bu}_6$, is left exact in $K(m)$-homology for $m>0$. This proves condition (3) completing the proof of the theorem for $\widetilde{\BSg}$. 

It therefore remains to establish Theorem \ref{main3} for the case $Z = \BSg$. We first observe that the $H$-map: 
\[ (1-c) : \BSUg \longrightarrow \BSUg, \]
descends to the (non-equivariantly) homotopy trivial map on $\mbox{K}(\Z, 4)$, along the map given by the second Chern class. In particular, we have a (non-equivariant) lift of the $H$-map:
\[ \lift{(1-c)} : \BSUg \longrightarrow \BUg \langle 6 \rangle. \]
Recall that the space $\BSUg = \underline{\B\D}\langle 1 \rangle_{2(1+\alpha)}$ is homotopy equivalent to a space with the projective property. In addition, the space $\BUg \langle 6 \rangle = \underline{\B\D}\langle 1 \rangle_{3(1+\alpha)}$ is a space in the Omega spectrum for the complete $\MU$-module spectrum $\Maps(E\Z/2_+, \B\D \langle 1 \rangle)$. It follows from Theorem \ref{main1} that there is a unique equivariant lift of $(1-c) : \underline{\B\D}\langle 1 \rangle_{2(1+\alpha)} \longrightarrow \underline{\B\D}\langle 1 \rangle_{2(1+\alpha)}$:
\[
\xymatrix{
  &  \underline{\B\D}\langle 1 \rangle_{3(1+\alpha)} \ar[d] \\
 \underline{\B\D}\langle 1 \rangle_{2(1+\alpha)} \ar[ur]^{\lift{(1-c)}} \ar[r]^{(1-c)} & \underline{\B\D}\langle 1 \rangle_{2(1+\alpha)}.
}
\]

We take our sequence to be
$$\xymatrix{\BSg \ar@{->}[r] & \BPRm{1}{2(1+\alpha)} \ar@{->}[r]^{\lift{(1-c)}} & \BPRm{1}{3(1+\alpha)}}$$
As above, the right two spaces are complete and have the projective property. By \cite[Theorem 6.4]{KLW}, this sequence satisfies condition (3). It remains to show condition (2). To do this, it is sufficient to show that the following composite is null:
\[ \lift{(1-c)} \circ i : \BSg \longrightarrow \underline{\B\D}\langle 1 \rangle_{2(1+\alpha)}^{h\Z/2} \longrightarrow \underline{\B\D}\langle 1 \rangle_{3(1+\alpha)}^{h\Z/2}. \]
But notice that this is covered by 
\[ (1-c) \circ i : \BSg \longrightarrow \underline{\B\D}\langle 1 \rangle_{2(1+\alpha)}^{h\Z/2} \longrightarrow \underline{\B\D}\langle 1 \rangle_{2(1+\alpha)}^{h\Z/2}. \]
which is clearly null. It follows that the map $\lift{(1-c)} \circ i$ above factors through the space $X := \underline{\B\D}\langle 0 \rangle_{1+2\alpha}^{h\Z/2}$. As above, the homotopy fixed point spectral sequence computing $\pi_*X$ shows that the top homotopy group of $X$ is in degree 3. It follows that any map from $\BSg$ to the space $X$ is null. This completes the proof of Theorem \ref{main3}.

\subsection{Short exact sequences} 

Let us now establish the existence of the short exact sequences of completed algebras claimed in Theorem \ref{main4}. We will make use of the following lemma.
\begin{lemma} Suppose the sequence $\xymatrix{Z_3 \ar@{->}[r]^-a & Z_2 \ar@{->}[r]^-b & Z_1}$ induces a short exact sequence in $E(n)$-cohomology, $Z_1$ and $Z_2$ are part of Landweber flat real pairs, and $E(n)^*(Z_3)$ is Landweber flat. Then it also induces a short exact sequence in $ER(n)$-cohomology. 
\end{lemma}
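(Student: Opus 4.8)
The plan is to exhibit the desired short exact sequence in $ER(n)$-cohomology as the abutment of a short exact sequence of Bockstein spectral sequences built by tensoring the given $E(n)$-cohomology sequence against the point BSS. Write the space-level maps as $Z_3 \xrightarrow{a} Z_2 \xrightarrow{b} Z_1$ and abbreviate $A = E(n)^*(Z_1)$, $B = E(n)^*(Z_2)$, $C = E(n)^*(Z_3)$, so that the hypothesis provides a short exact sequence of $E(n)^*$-modules
$$ 0 \lra B \hat{\otimes}_{E(n)^*} I(A) \lra B \lra C \lra 0, $$
with kernel $K := B \hat{\otimes}_{E(n)^*} I(A)$. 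Since the hat map only rescales degrees it is an isomorphism, so I would transport this to a short exact sequence $0 \lra \hat{K} \lra \hat{B} \lra \hat{C} \lra 0$ of $\hat{E}(n)^*$-modules. Both $B$ and $C$ are Landweber flat (the former because $Z_2$ sits in a Landweber flat real pair, the latter by hypothesis), and a short argument with the $\Tor$ long exact sequence then shows that $K$, hence $\hat{K}$, is Landweber flat as well. (As usual, for $n=1$ one replaces $\hat{E}(n)^*$ by $E(1)^0$ throughout.)

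Next I would tensor this short exact sequence of $\hat{E}(n)^*$-modules with the point Bockstein spectral sequence $\mbox{E}_\ast$. The key input, from \cite{KW2}, is that each page $\mbox{E}_r$ is an $E(n)_\ast E(n)$-comodule that is finitely presented over $E(n)^*$; Landweber flatness of $\hat{K}$, $\hat{B}$, $\hat{C}$ therefore forces $\Tor_1^{\hat{E}(n)^*}(\mbox{E}_r, -)$ to vanish on each of them, so the tensored sequence is exact on every page and homology commutes with the tensor product. This yields a short exact sequence of spectral sequences of $ER(n)^*$-modules
$$ 0 \lra \hat{K} \otimes \mbox{E}_\ast \lra \hat{B} \otimes \mbox{E}_\ast \lra \hat{C} \otimes \mbox{E}_\ast \lra 0 $$
(all tensor products over $\hat{E}(n)^*$) which is short exact at every finite page, and hence---because $x$ is nilpotent, so the BSS is finite and free of $\lim^1$ difficulties---also at $\mbox{E}_\infty$. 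By Theorem \ref{ss} the three spectral sequences converge to $\hat{K} \otimes ER(n)^*$, $\hat{B} \otimes ER(n)^*$, and $\hat{C} \otimes ER(n)^*$, and short exactness of the $\mbox{E}_\infty$-pages together with the finite, exhaustive filtration promotes to a short exact sequence of abutments.

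It then remains to identify the three abutments with the terms in the definition of a short exact sequence of completed $ER(n)^*$-algebras. Theorem \ref{main2}, applied to the real pairs containing $Z_1$ and $Z_2$, gives $\hat{B} \otimes ER(n)^* = ER(n)^*(Z_2)$ and $\hat{A} \otimes ER(n)^* = ER(n)^*(Z_1)$; feeding the latter into a base-change computation identifies $\hat{K} \otimes ER(n)^* = (\hat{B} \hat{\otimes}_{\hat{E}(n)^*} I(\hat{A})) \otimes ER(n)^*$ with $ER(n)^*(Z_2) \hat{\otimes}_{ER(n)^*} I(ER(n)^*(Z_1))$, exactly the module required. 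The third term is the subtle one, since $Z_3$ is not assumed to lie in a real pair. Here I would use that short exactness forces $a^* \circ b^*$ to vanish on augmentation ideals: under the isomorphism $\mbox{E}_\ast(Z_2) \cong \hat{B} \otimes \mbox{E}_\ast$ of Theorem \ref{main2}, the genuine map $a^* : \mbox{E}_\ast(Z_2) \lra \mbox{E}_\ast(Z_3)$ annihilates the sub-spectral-sequence generated by the image of $I(\hat{A}) \otimes \mbox{E}_\ast$ already at $\mbox{E}_1$, hence on all pages, and so factors through a map of spectral sequences $\hat{C} \otimes \mbox{E}_\ast \lra \mbox{E}_\ast(Z_3)$. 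At $\mbox{E}_1$ this factorization is the canonical isomorphism $\hat{C} \otimes \mbox{E}_1 \cong C[x] = \mbox{E}_1(Z_3)$, so it is an isomorphism of spectral sequences, inducing $\hat{C} \otimes ER(n)^* \cong ER(n)^*(Z_3)$ compatibly with the quotient out of $ER(n)^*(Z_2)$. This establishes the base-change identity for $Z_3$ without a real-pair hypothesis and turns the short exact sequence of abutments into the asserted
$$ 0 \lra ER(n)^*(Z_2) \hat{\otimes}_{ER(n)^*} I(ER(n)^*(Z_1)) \lra ER(n)^*(Z_2) \lra ER(n)^*(Z_3) \lra 0. $$

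I expect the main obstacle to be precisely this last identification: deducing the base-change formula for $Z_3$ from the short exact sequence alone, and verifying that the factored comparison map is genuinely induced by $a$ and is an isomorphism on $\mbox{E}_1$. A secondary technical point that needs care is the page-by-page exactness of the tensored spectral sequence, which rests on the comodule finiteness of the BSS pages from \cite{KW2} together with the Landweber flatness of the kernel $K$; the completed tensor products also require the standard $2$-adic conventions of \cite{RWY} to control convergence.
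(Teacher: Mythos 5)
Your proposal is correct and follows essentially the same route as the paper: tensor the short exact sequence of Landweber flat hatted modules with the point Bockstein spectral sequence, use flatness for page-by-page exactness, treat $Z_3$ (which need not lie in a real pair) by exploiting surjectivity of $a^*$ to transport the base-change isomorphism from $Z_2$, and pass to abutments via the finite filtration. The paper phrases the $Z_3$ step slightly differently---observing that $\hat{E}(n)^*(Z_3)$ consists of permanent cycles because it is the image of permanent cycles and then invoking Theorem \ref{ss}---but this is the same argument as your factorization of $a^*$ through the quotient spectral sequence.
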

\begin{proof}The short exact sequence in $E(n)^*(-)$ implies a short exact sequence on the $E_1$-page of the BSS. For each $i$, let $\hat{E}(n)^*(Z_i)$ denote the hatted subalgebra of $E(n)^*(Z_i)$ as above. By Theorem \ref{main2} we have isomorphisms of spectral sequences $E_r^{*, *}(Z_i)=E_r^{*, *}(pt) \otimes_{\hat{E}(n)^*} \hat{E}(n)^*(Z_i)$ for $i=1, 2$. For $i=3$, note that we at least have this isomorphism on the $E_1$-page of the BSS, $E_1^{*, *}(Z_3)=E_1^{*, *}(pt) \otimes_{\hat{E}(n)^*} \hat{E}(n)^*(Z_3)$. Since $a^*$ is a surjection, $\hat{E}(n)^*(Z_2)$, which consists of permanent cycles, maps onto $\hat{E}(n)^*(Z_3)$. Furthermore, since $\hat{E}(n)^*(Z_3)$ is Landweber flat, it follows that we in fact have an isomorphism of spectral sequences, $E_r^{*, *}(Z_3)=E_r^{*, *}(pt) \otimes_{\hat{E}(n)^*} \hat{E}(n)^*(Z_3)$. Since $a^*$ is a surjection on $\hat{E}(n)^*(-)$, it follows from the previous statement that it is a surjection on each page of the BSS. Thus, we have a short exact sequence of spectral sequences
$$\footnotesize \xymatrix{0 \ar@{->}[r] & E_r^{*, *}(pt) \otimes_{\hat{E}(n)^*} \hat{E}(n)^*(Z_1) \ar@{->}[r]^-{1 \otimes b^*} & E_r^{*, *}(pt) \otimes_{\hat{E}(n)^*} \hat{E}(n)^*(Z_2)  \ar@{->}[r]^-{1 \otimes a^*} & E_r^{*, *}(pt) \otimes_{\hat{E}(n)^*} \hat{E}(n)^*(Z_3) \ar@{->}[r] & 0}$$
In particular, we have a short exact sequence on the $E_\infty$-page. Using the Snake Lemma and induction on filtrations (which are finite), we notice that this must represent a short exact sequence of $ER(n)^*$-modules
$$0 \longrightarrow ER(n)^*(Z_1) \longrightarrow ER(n)^*(Z_2) \longrightarrow ER(n)^*(Z_3) \longrightarrow 0$$
\end{proof}
Without loss of generality, let us establish the existence of the short exact sequence of completed algebras:
\[ 1 \lra ER(n)^*(\Ek(\Z/2,3)) \lra ER(n)^*(\widetilde{\BSg}) \lra ER(n)^*(\BOg\langle 8 \rangle) \lra 1. \]
The remaining cases will follow exactly along the same lines. First notice that the proof of the previous theorem (or \cite{KLW}) shows that we have the following short exact sequence of $K(n)$-algebras:
\[1 \lra K(n)^*(\Ek(\Z/2,3)) \lra K(n)^*(\widetilde{\BSg}) \lra K(n)^*(\BOg\langle 8 \rangle) \lra 1. \]
Following the lines of the proof of \cite[Theorem 1.19]{RWY}, this gives rise to a short exact sequence of (reduced) $K(n)^*$-modules:
\[ 0 \lra \tilde{K}(n)^*(\widetilde{\BSg}_+ \wedge \Ek(\Z/2,3)) \lra \tilde{K}(n)^*(\widetilde{\BSg}_+) \lra \tilde{K}(n)^*(\BOg\langle 8 \rangle_+) \lra 0, \]
where the map $\tilde{K}(n)^*(\widetilde{\BSg}_+ \wedge \Ek(\Z/2,3)) \lra \tilde{K}(n)^*(\widetilde{\BSg}_+)$ is induced by the diagonal map on $\widetilde{\BSg}$ followed by the bottom cohomology class on the second factor. We may use the fact that all spaces in the short exact sequence above have evenly graded $K(n)$-cohomology and invoke Lemma \ref{K(n)toE(n)} to see that one has a short exact sequence in (reduced) $E(n)^*$-modules: 
\[ 0 \lra \tilde{E}(n)^*(\widetilde{\BSg}_+ \wedge \Ek(\Z/2,3)) \lra \tilde{E}(n)^*(\widetilde{\BSg}_+) \lra \tilde{E}(n)^*(\BOg\langle 8 \rangle_+) \lra 0. \]
The above lemma now implies that we have a short exact sequence in $ER(n)^*(-)$. Applying the Kunneth isomorphism of Theorem \ref{Kunneth} completes the proof.

\begin{remark}\label{chooseY} Note that the above argument does not use functoriality in the spaces $Y_i$ corresponding to the $Z_i$. In fact, we do not require $Z_3$ to even be part of a Landweber flat real pair (and we do not know of such a space in the case of $\BOg\langle 8 \rangle$ for $n>2$). A choice of $Y_i$ gives a lift of the subalgebra $\hat{E}(n)^*(Z_i)$ of $E(n)^*(Z_i)$ to a subalgebra of $ER(n)^*(Z_i)$ of the same name. In the case of $Z_3$, we do not have such a canonically defined lift, and we do \emph{not} know of an isomorphism $ER(n)^*(Z_3)=ER(n)^*\otimes_{\hat{E}(n)^*}\hat{E}(n)^*(Z_3)$. Also, nothing in our proof requires the hatted subalgebra of $ER(n)^*(Z_1)$ to map to the hatted subalgebra of $ER(n)^*(Z_2)$. We think of choosing a space with projective property $Y$ corresponding to a space $Z$ as akin to choosing multiplicative generators. Our construction of the short exact sequences is coordinate free in $ER(n)^*(-)$.
\end{remark}

\pagestyle{empty}

\end{document}